\newtheorem{theorem}{Theorem}
\newtheorem{lemma}[theorem]{Lemma}
\newtheorem{proposition}[theorem]{Proposition}
\theoremstyle{definition}
\newtheorem{definition}[theorem]{Definition}
\newtheorem{remark}{Remark}
\newtheorem{assumption}{Assumption}
\newtheorem{example}{Example}
\pgfplotsset{compat=1.10}
\DeclareMathOperator{\Ao}{\mathbf{A}}
\DeclareMathOperator{\Fo}{\mathbf{F}}
\DeclareMathOperator{\Ro}{\mathbf{R}}
\DeclareMathOperator{\Bo}{\mathbf{B}}
\DeclareMathOperator{\To}{\mathbf{T}}
\DeclareMathOperator{\Mo}{\mathbf{M}}
\newcommand{\uo}{\mathbf u}
\newcommand{\vo}{\mathbf v}
\newcommand{\kao}{{\boldsymbol{\kappa}}}
\newcommand{\X}{\mathbb{X}}
\newcommand{\Y}{\mathbb{Y}}
\newcommand{\N}{\mathbb{N}}
\newcommand{\R}{\mathbb{R}}
\newcommand{\Z}{\mathbb{Z}}
\newcommand{\dom}{\operatorname{dom}}
\newcommand{\Rc}{\mathcal{R}}
\newcommand{\Fc}{\mathcal{F}}
\newcommand{\Qc}{\mathcal{Q}}
\newcommand{\ran}{\operatorname{ran}}
\newcommand{\la}{\lambda}
\newcommand{\La}{\Lambda}
\newcommand{\ka}{\kappa}
\newcommand{\al}{\alpha}
\newcommand{\ph}{\varphi}
\newcommand{\sign}{\operatorname{sign}}
\newcommand{\im}{\operatorname{im}}
\newcommand{\id}{\operatorname{Id}}
\newcommand{\argmin}{\operatorname{argmin}}
\newcommand{\diff}{\operatorname{d}}
\newcommand{\prox}{\operatorname{Prox}}
\newcommand\abs[1]{\left\vert#1\right\vert}
\newcommand\sabs[1]{{\lvert#1\rvert}}
\newcommand\norm[1]{{\left\Vert#1\right\Vert}}
\newcommand\snorm[1]{\Vert#1\Vert}
\newcommand\inner[2]{\left\langle#1,#2\right\rangle}
\newcommand\sinner[2]{\langle#1,#2\rangle}
\newcommand{\mpi}{+}
\newcommand{\qone}{s}
\title{Error Estimates for Data-driven Weakly Convex Frame-based Image Regularization}
\author{Andrea Ebner}
\affil{Department of Mathematics, University of Innsbruck\authorcr
Technikerstrasse 13, 6020 Innsbruck, Austria
 \authorcr E-mail:  \texttt{andrea.ebner@uibk.ac.at}
 }
 \author{Matthias Schwab}
\affil{Department of Radiology, Medical University of Innsbruck\authorcr
Anichstraße 35, 6020 Innsbruck, Austria.
 \authorcr E-mail:  \texttt{matthias.schwab@uibk.ac.at}
 }
\author{Markus Haltmeier}
\affil{Department of Mathematics, University of Innsbruck\authorcr
Technikerstrasse 13, 6020 Innsbruck, Austria
 \authorcr E-mail:  \texttt{markus.haltmeier@uibk.ac.at}
 }
\begin{document}
\emergencystretch 2em

\maketitle

\begin{abstract}
Inverse problems are fundamental in fields like medical imaging, geophysics, and computerized tomography, aiming to recover unknown quantities from observed data. However, these problems often lack stability due to noise and ill-conditioning, leading to inaccurate reconstructions. To mitigate these issues, regularization methods are employed, introducing constraints to stabilize the inversion process and achieve a meaningful solution.
Recent research has shown that the application of regularizing filters to diagonal frame decompositions (DFD) yields regularization methods. These filters dampen some frame coefficients to prevent noise amplification. This paper introduces a non-linear filtered DFD method combined with a learning strategy for determining optimal non-linear filters from training data pairs.
In our experiments, we applied this approach to the inversion of the Radon transform using 500 image-sinogram pairs from real CT scans. Although the learned filters were found to be strictly increasing, they did not satisfy the non-expansiveness condition required to link them with convex regularizers and prove stability and convergence in the sense of regularization methods in previous works. Inspired by this, the paper relaxes the non-expansiveness condition, resulting in weakly convex regularization. Despite this relaxation, we managed to derive stability, convergence, and convergence rates with respect to the absolute symmetric Bregman distance for the learned non-linear regularizing filters. Extensive numerical results demonstrate the effectiveness of the proposed method in achieving stable and accurate reconstructions. 

\medskip\noindent \textbf{Keywords:} Inverse problems, image reconstruction, regularization method, weakly convex regularization,  data-driven regularization, convergence analysis, error estimates, diagonal frame decomposition
\end{abstract}

\section{Introduction}

Let $\Ao \colon \X \to \Y$ be a bounded linear operator mapping between two real Hilbert spaces $\X$ and $\Y$. Our focus is on the inverse problem of reconstructing $x^\mpi \in \X$ from noisy data
\begin{equation} \label{eq:ip}
y^\delta = \Ao x^\mpi + z \,,
\end{equation}
where $z$ denotes the data perturbation with $\snorm{z} \leq \delta$ for a given noise level $\delta > 0$. The inversion of the operator $\Ao$ is frequently ill-posed, as evidenced by the discontinuity of the Moore-Penrose inverse $\Ao^\mpi$. Consequently, exact solution methods for $\Ao x = y$ can significantly amplify small measurement errors in the data. To tackle this challenge, regularization methods have been devised with the goal of discovering approximate yet stable solution strategies \cite{Be17,En96,Ri03}.

\subsection{Filter-based Regularization}

Recently, there has been significant research on diagonal frame decompositions (DFD) combined with regularizing filters, presenting an efficient regularization approach for the inverse problem \eqref{eq:ip}. Suppose $\Ao$ has a diagonal frame decomposition (according to \cite{Eb23}) providing the representations:
\begin{align*}
\Ao &= \sum_{\lambda \in \Lambda} \kappa_\lambda \langle \cdot, u_\lambda \rangle \overline{v}_\lambda, \\
\Ao^\mpi &= \sum_{\lambda \in \Lambda} \kappa_\lambda^{-1} \langle \cdot, v_\lambda \rangle \overline{u}_\lambda,
\end{align*}
where $(u_\lambda)_{\lambda \in \Lambda}$ and $(v_\lambda)_{\lambda \in \Lambda}$ are frames of $\ker(\Ao)^\bot$ and $\overline{\ran(\Ao)}$, respectively, with corresponding dual frames $(\overline{u}_\lambda)_{\lambda \in \Lambda}$ and $(\overline{v}_\lambda)_{\lambda \in \Lambda}$.

Frame decompositions, a more general concept than singular value decomposition (SVD), were initially studied by Candés and Donoho \cite{Ca02, Do95} in the context of statistical estimation. Recent investigations, such as \cite{frikel2018efficient, Fr19, Eb23, goppel2023translation, quellmalz2023frame, Hu21_2}, explore the utility of frame decomposition for regularizing inverse problems.
It has been established in \cite{Eb23} that if $(\kappa_\lambda)_{\lambda \in \Lambda}$ converge to zero, then $\Ao^\mpi$ becomes unbounded, necessitating the application of regularization methods to address the solution of \eqref{eq:ip}.

Filtered DFD methods incorporate a regularizing filter $(\ph_\alpha)_{\alpha>0}$ to damp some frame coefficient, resulting in the reconstruction method:
\begin{equation} \label{eq:non-linear}
    \Fc_\alpha(y^\delta) \coloneqq \sum_{\lambda \in \Lambda} \kappa_\lambda^{-1} \ph_\alpha(\kappa_\lambda, \langle y^\delta, v_\lambda \rangle) \overline{u}_\lambda.
\end{equation}
The reconstruction mappings are designed to reduce noise amplification by diminishing coefficients with damping factors $\ph_\alpha(\kappa_\lambda, \langle y^\delta, v_\lambda \rangle)$ before inversion.
Key theoretical questions concern the stability of $\Fc_\alpha$ and its convergence to noiseless solutions as $\delta$ and $\alpha$ approach zero.

The analysis of filter methods is well established for linear filters in the SVD case \cite{En96, groetsch1984theory}. Choosing $\ph_\alpha(\kappa_\lambda, \langle y^\delta, v_\lambda \rangle) = (f_\alpha(\kappa_\lambda) \kappa_\lambda) \cdot \langle y^\delta, v_\lambda \rangle$ for a family $(f_\alpha)_{\alpha> 0}$ reduces the linear reconstruction operator $\Fc_{\alpha}(y^\delta) \coloneqq \sum_{\lambda \in \Lambda} f_\alpha(\kappa_\lambda) \langle y^\delta, v_\lambda \rangle \overline{u}_\lambda$. For the convergence analysis of the frame case, see \cite{Eb23}.
Linear regularizing filters depend only on quasi-singular values and are not data-dependent, which limits their performance. In practical situations, filters that include non-linear dependencies on $\langle y^\delta, v_\lambda \rangle$ typically offer better noise-filtering abilities than linear approaches; see \cite{An21, Ga98, Ma09}.

In the general case, $(\ph_\alpha)_{\alpha>0}$ is usually called a non-linear regularizing filter due to the non-linear dependence of the data coefficients in the damping process, resulting in a non-linear reconstruction method. For certain non-linear filters, it has been shown that they yield a convergent regularization method, such as the soft thresholding filter in \cite{Fr19}.
In \cite{ebner2023convergence}, filtered DFD methods were firstly studied for general non-linear filters, see Definition \ref{def:old_filter} . A convergence analysis was provided under the conditions that the functions $\ph_\alpha \colon \mathbb{R}^2 \to \mathbb{R}$ in the second component are non-expansive, increasing, converge monotonically to the identity as $\alpha \rightarrow 0$, and for small quasi-singular values, the behavior around zero can be controlled in a certain sense.

\subsection{Learned Regularization}

Recently, data-driven methods have become increasingly popular for solving ill-posed inverse problems. In this context, learned spectral regularization techniques for linear inverse problems have be studied \cite{bauermeister2020learning, kabri2022convergent}. However, to the best of our knowledge, no work has yet been done using learned non-linear regularizing filters $\ph_\alpha$ for solving inverse problems.  
In \cite{ebner2023convergence}, it was shown that under certain conditions on the filters, general non-linear frame-based diagonal filtering is a convergent regularization method. This provided us with the idea to check whether non-linear regularizing filters that are purely learned from data meet the theoretical requirements necessary for convergent regularization. 

In short, let $\Ao \colon \X \rightarrow \Y$ be a bounded linear forward operator with DFD $(\uo,\vo,\kao) = (u_\la,v_\la,\ka_\la)_{\la \in \La}$. Consider the learned reconstruction method 
\[ \Fc(y^\delta) = \sum_{\lambda \in \Lambda} \kappa_\lambda^{-1} \ph_{\hat{\theta}(\delta)}(\kappa_\lambda, \langle y^\delta, v_\lambda \rangle) \overline{u}_\lambda,  \]
where $\ph_\theta \colon \R^+ \times \R \rightarrow \R$ is a neural network with learnable parameters $\theta \in \Theta$ that is taken as non-linear filter. On a training dataset consisting of multiple data pairs of noisy measurements depending on $\delta$ and ground truth reconstructions, the neural network $(\ph_\theta)_  {\theta \in \Theta}$ is trained such that some reconstruction error is minimized, resulting in the learned filters $\ph_{\hat{\theta}(\delta)}$.  
\begin{figure*}[!t]
    \centering
    \includegraphics[width= \textwidth]{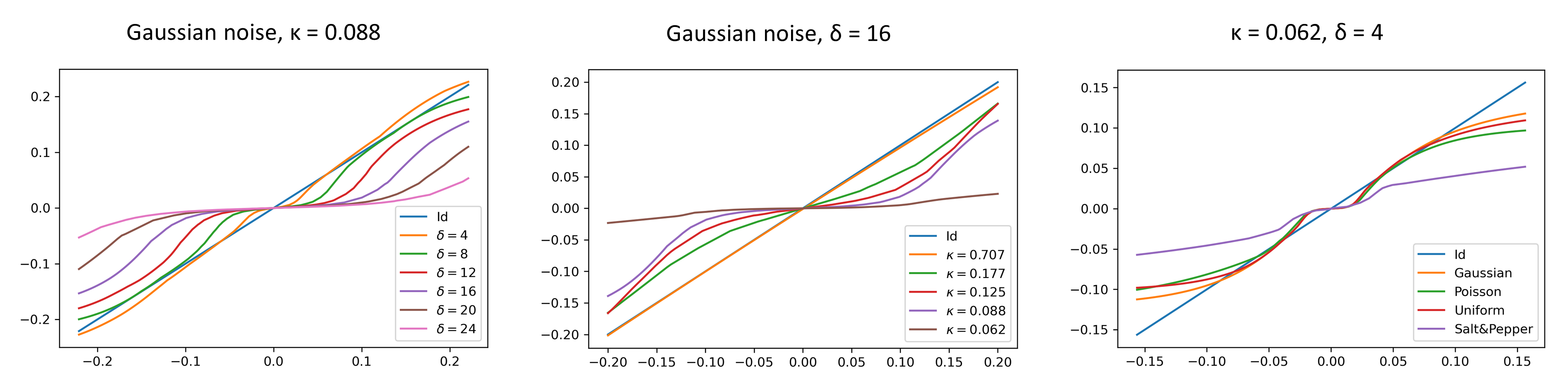}
    \caption{Learned filters for for different noise levels (left), different quasi-singular values (middle) and different noise types (right).}
    \label{fig:learned_filters}
\end{figure*}

In our experiments shown in section \ref{sec:learnt}, we learned non-linear regularizing filters for inverting the Radon transform. Using a DFD based on Haar wavelets the filters were learned from 400 image-sinogram pairs of real data CT scans \cite{soares2020sars}, where the sinograms were additionally corrupted with different types and levels of noise. Figure \ref{fig:learned_filters} shows examples of learned filters for different noise levels $\delta$, different quasi-singular values $\kappa$ and different noise types. One can clearly observe that although the filters are bijective and strictly increasing, and move towards the identity for smaller and smaller noise levels, they do not satisfy non-expansiveness, which is a precondition for convergence in \cite{ebner2023convergence}.
Inspired by this, in this paper, we will weaken the conditions of regularizing filters, providing stability and convergence for our reconstruction map while meeting the behavior of learned filter functions. 

\subsection{Outline}

The paper is structured as follows: The background section clarifies notional details and defines rigorously the non-linear filtered diagonal frame decomposition (DFD). For proof-related reasons as well as for a better geometric understanding, its connection to variational regularization is established, and further technical preparatory work is conducted. The elaboration of the experimental results is presented in section \ref{sec:learnt}, detailing implementation specifics. The theoretical main section \ref{sec_convReg} investigates the stability and convergence of the method and derives quantitative estimates of the convergence. The paper concludes with a brief summary discussing consistency of the experiments with the theory and outlines future directions.

\section{Auxiliary Results}

In this section, we introduce some basic notation and rigorously define our concept of the non-linear filtered diagonal frame decomposition (DFD), clarifying the conditions the filters must meet in this work. 
We explain the connection between filter-based and variational regularization and demonstrate that extending the conventional definition of proximity operators to certain non-convex functionals allows us to reduce the non-linear filtered DFD method, with relaxed conditions in contrast to \cite{ebner2023convergence}, to a still convex optimization problem. 
This not only has technical benefits but also provides a better geometric understanding of learned filter methods.

\subsection{Notation}

Let $\X$, $\Y$ be Hilbert spaces. For an operator $\Bo \colon \dom(\Bo) \subseteq \X \rightarrow \Y$ we denote $\dom(\Bo)$ as the domain and $\ran(\Bo)$ as the range of $\Bo$. In case of linear bounded operators, the domain is the entire space $\X$ and the Moore-Penrose inverse of $\Bo$ is denoted by $\Bo^+$ and is defined as $\Bo^+ \colon \dom(\Bo^+) \subseteq \Y \rightarrow \X$, where $\dom(\Bo^+)\coloneqq \ran(\Bo) \oplus \ran(\Bo)^\perp$.

Functionals on $\X$ will be written as $\Rc \colon \X \rightarrow \R \cup \{\infty\}$ and we  usually use $r$ or $\qone$ to denote a functional when $\X = \R$. We define the domain of  $\Rc$ by $\dom(\Rc) \coloneqq \{x \in \X \mid \Rc(x) < \infty \}$ and we call $\Rc$ proper if $\dom(\Rc) \neq \emptyset$. $\Rc$ is convex if $\Rc(t x + (1-t)y) \leq t \Rc(x) + (1-t) \Rc(y)$  for all $x , y \in \X$ and $t \in (0,1)$ and strictly convex if the strict inequality holds. We call $\Rc$ coercive if $\lim_{\norm{x}\rightarrow \infty} \Rc(x) = \infty$.

\subsection{Non-Linear Frame Filtering}

Consider a bounded linear operator $\Ao \colon \X \to \Y$ and an index set $\La$ that is at most countable.   We assume that $\Ao$ has a diagonal frame decomposition (DFD) $(\uo,\vo,\kao) = (u_\la,v_\la,\ka_\la)_{\la \in \La}$ as defined in \cite{Eb23}.

\begin{definition}[Diagonal Frame Decomposition, DFD] \label{def:dfd}
We call the triple $(\uo, \vo, \kao) = (u_\la, v_\la, \ka_\la)_{\la \in \La}$ a  diagonal frame decomposition (DFD) for $\Ao$ if the following holds:
\begin{enumerate}[itemindent =2em, leftmargin =1em,  label=(D\arabic*)]
\item\label{D1}  $(u_\la)_{\la \in \La}$ is a frame for $(\ker{\Ao})^{\perp} \subseteq \X$.
\item\label{D2}  $(v_\la)_{\la \in \La}$ is a frame for $\overline{\ran\Ao}\subseteq \Y$.
\item\label{D3}  $(\kappa_\la)_{\la \in \La}\in (0, \infty)^\La$ satisfies the quasi-singular relations $\forall \la \in \La \colon  \Ao^* v_\la = \ka_\la u_\la $.\end{enumerate}
We call $(\ka_\la)_{\la \in \La}$ the family of quasi-singular values and $(u_\la)_{\la \in \La}$, $(v_\la)_{\la \in \La}$  the   corresponding quasi-singular systems.
\end{definition}

DFDs are a broader concept than the singular value decomposition (SVD), essentially reducing to the SVD when $(u_\lambda)_{\lambda \in \Lambda}$ and $(v_\lambda)_{\lambda \in \Lambda}$ are orthonormal bases. The key advantage of DFDs over SVDs lies in their quasi-singular systems, which often provide better approximation properties. For instance, when $(u_\lambda)_{\lambda \in \Lambda}$ is chosen as the wavelet basis, as seen in applications like the Radon transform \cite{donoho1995nonlinear, Eb23, Hu21_2}. We leverage this approach for our numerical evaluation in Section \ref{sec:learnt}.

For a frame $\uo$ we define the synthesis and analysis operator of  $\bar \uo$ by 
\begin{align*}
\To_{\uo} \colon \ell^2(\La) \rightarrow \X \colon (c_\la)_{\la} \mapsto \sum_{\la \in \La} c_\la  u_\la \\
\To^*_{\uo} \colon \Y \rightarrow \ell^2(\La) \colon y \mapsto (\inner{y}{u_\la})_{\la \in \La}.
\end{align*}
Let   $(\uo, \vo, \kao) $  be a DFD for $\Ao$, and let $\bar \uo$ be a dual frame of $\uo$, defined  by  $x = \To_{\bar \uo} \To^*_{\uo} x$ for all $x \in \X$.  
Using the DFD, the Moore-Penrose inverse of $\Ao$ can be expressed as
\begin{equation} \label{eq:dfd-inv}
\forall y  \in \dom(\Ao^\mpi) \colon \quad
\Ao^\mpi (y) = \sum_{\la \in \La} \frac{1}{\ka_\la} \inner{y}{v_\la} \bar{u}_\la =  \To_{\bar \uo} \circ  \Mo^\mpi_\kao \circ \To_{\vo}^* (y)   \,,
\end{equation}
where $\Mo_{\kao}$ is the component-wise multiplication operator $\Mo_{\kao}((x_\la)_{\la \in \La}) = (\ka_\la x_\la)_{\la \in \La}$ and $\Mo^\mpi_\kao$ its Moore-Penrose inverse. As the frame operators $ \To_{\bar \uo}$ and $\To_{\vo}^* $ are continuous and invertible,  diagonalizing $\Ao$ with a DFD basically reduces the inverse problem \eqref{eq:ip} to an inverse problem with a diagonal forward operator from $\ell^2(\La)$ to $\ell^2 (\La)$.

Because inverting $\Ao$ is ill-posed, the values of $(\ka_\la)_{\la \in \La}$ accumulate at zero, implying that $(1/\ka_\la)_\la$ becomes unbounded.  Consequently, small errors in the data can be significantly amplified using \eqref{eq:dfd-inv}. To mitigate this error amplification, we employ regularizing filters with the goal of dampening noisy coefficients. In \cite{ebner2023convergence}, the following definition of a non-linear regularizing filter was given: 

\begin{definition}[Non-linear Regularizing Filter] \label{def:old_filter}
A family $(\ph_\al\colon \R_+ \times \R \to\R)_{\al > 0}$ is called a non-linear regularizing filter if for all $\al, \ka > 0$, the following holds
\begin{itemize}[itemindent =2em, leftmargin =1em]
\item  $\ph_\al(\ka, \cdot)$ is nonexpansive.
\item  $\ph_\al(\ka, \cdot)$ is monotonically increasing.
\item  $\ph_{\al}(\ka, 0)=0$.
\item  $ \forall c \in \R \colon \lim_{\al \to 0} \ph_{\al}(\ka,c) = c$.
\end{itemize}
\end{definition}

In this work, we use a slightly different definition of regularizing filters than Definition \ref{def:old_filter}. Instead of assuming the function $\ph_\alpha(\kappa, \cdot)$ to be nonexpansive, we are content with requiring bijectivity.  While this change seems minor, it has a major impact in practical usage. Nonexpansiveness is a highly restrictive condition, and especially learned filters usually do not meet this, while bijectivity and increasingness are satisfied without any additional constraints. The new definition of regularizing filters correspond to variational regularization with weakly convex penalty, illustrated in section \ref{sec:weakly_convex}. Therefore, we define a weakly convex regularizing filter as follows:

\begin{definition}[Weakly Convex Regularizing Filter] \label{def:filter}
We call a family $(\ph_\al\colon \R_+ \times \R \to\R)_{\al > 0}$ a weakly convex regularizing filter if for all $\al, \ka > 0$, the following holds
\begin{enumerate}[itemindent =2em, leftmargin =1em,  label=(F\arabic*)]
\item  \label{def:filter1} $\ph_\al(\ka, \cdot)$ is bijective.
\item  \label{def:filter2} $\ph_\al(\ka, \cdot)$ is strictly increasing.
\item  \label{def:filter3} $\ph_{\al}(\ka, 0)=0$.
\item  \label{def:filter4} $ \forall c \in \R \colon \lim_{\al \to 0} \ph_{\al}(\ka,c) = c$.
\end{enumerate}
\end{definition}

 In fact, the above requirements imply that $\ph_\al(\ka,\cdot)$ is a homeomorphism, being bijective, continuous, and  $\ph_\al(\ka,\cdot)^{-1}$ is continuous. 

Let $(\ph_\al)_{\al > 0}$ be a weakly convex regularizing filter, then we define by $\Fc_{\al}$ the non-linear filtered DFD 

\[ \Fc_\al (y^\delta) 
\coloneqq \sum_{\la \in \La} \frac{1}{\ka_\la}\ph_{\al}(\ka_\la,\sinner{y^\delta}{v_\la}) \bar u_\la = 
 \To_{\bar \uo} \circ  \Mo^+_\kao \circ \,\Phi_{\al, \kao} 
\circ \To_{\vo}^* y^\delta,  \]
where $\Phi_{\al,\kao} \colon \dom(\Phi_{\al,\kao}) \subseteq \ell^2(\La) \rightarrow \ell^2(\La) \colon (c_\la)_{\la \in  \La} \mapsto (\ph_{\al}(\ka_\la,c_\la))_{\la \in \La}$ is a non-linear diagonal operator.

The aim of this paper is to acquire the filter functions $\ph_\al$ from data, demonstrating the stability and convergence of the method $\Fc_\al$ under conditions that align with the learned filter behavior. This demonstrates, in particular, that data-driven non-linear spectral regularization is a convergent regularization method.

Given that the frame operators $\To_{\bar \uo}$ and $\To_\vo^*$ are linear and bounded, the examination of stability and convergence for the entire method is simplified to the analysis of the diagonal operator $\Mo^+_\kao \circ \,\Phi_{\al, \kao}$.

\subsection{Weakly Convex Regularization}\label{sec:weakly_convex}

An important ingredient of our analysis is the link of filter-based regularization methods to variational regularization. Recall that the objective of standard variational regularization is to minimize  $\mathcal{T}_{\al,y}(x) = \norm{\Ao x - y }^2/2 + \al \Rc(x)$, called the generalized Tikhonov functional, where the regularizing functional $\Rc$ is typically proper, convex, and lower semi-continuous.

In \cite{ebner2023convergence}, it has been demonstrated that if the filter functions $\ph_{\al}(\ka,\cdot)$ satisfy Definition \ref{def:old_filter} and $(\ph_\al(\ka,\cdot)^{-1}-\id)/\al$ is independent of $\al$, then the non-linear filtered DFD essentially simplifies to a specific variational regularization. Specifically, the evaluation of the diagonal part,  $\Mo^+_\kao \circ \,\Phi_{\al, \kao}$, is equivalent to minimizing $\norm{\Mo_{\kao} x - y }^2/2 + \al \Rc(x)$ with a proper, convex, and lower semi-continuous but potentially operator-dependent regularizer. 

Admitting the latter, supposing only that $\ph_{\al}(\ka,\cdot)$ is a non-linear regularizing filter by Definition \ref{def:old_filter}, also leads to a variational regularization with the regularization term  $\Rc_\al$ instead of a so-called stationary regularizer $\al \Rc$. Nonetheless,  $\Rc_\al$ remains proper, convex, and lower semi-continuous. In fact, their increasingness and nonexpansiveness are necessary conditions for the filter functions to be transformed into a optimization problem with convex penalty. These assumptions are not satisfied when learning filters from data. In Figure \ref{fig:learned_filters}, it is evident that the learned filters at least increase, but the slope exceeds 1, and at times, the filter surpasses the identity function. Still, we can turn the filtered DFD into a variational problem, but these characteristics mean that the regularization term might be non-convex or negative.

Reducing the filtered DFD method to variational regularization lies in the observation that increasing and nonexpansive filter functions serve as proximity operators for convex functions on $\R$. A key observation in this work is that it is possible to relax the nonexpansiveness assumption and use proximity operators with weakly convex penalties instead. 

\begin{definition}[Weakly Strictly Convex, Weakly Coercive]
A function $\qone \colon \R \rightarrow \R$ is called weakly (or 1-weakly) strictly convex if  $\qone + 1/2 \abs{ \cdot}^2$ is strictly convex, and $\qone$ is weakly coercive if $\qone + 1/2 \abs{ \cdot}^2$ is coercive.
\end{definition}

\begin{definition}[Generalized Proximity Operator] \label{Def:GProx}
Let $\qone\colon \R \rightarrow \R$ be weakly strictly convex and weakly coercive. We define the single-valued proximity operator of $\qone$ by
\[ \prox_\qone(x) \coloneqq \underset{\tilde{x} \in \R}{\argmin} \{\frac{1}{2} \abs{x- \tilde{x}}^2 +\qone(\tilde{x}) \}.\]
\end{definition}

The objective function of the generalized proximity operator is strictly convex and coercive, guaranteeing the existence of a unique minimizer. 
For more details on generalized proximity operators we refer to \cite{gribonval2020prox}, where the authors investigate the use of proximity operators with non-convex penalty, and \cite{Goujon2024weaklyconvex}, where learned weakly convex regularizers were used for image reconstruction.  It is worth noting that in some literature, the term "semiconvex" is used interchangeably with "weakly convex".

The following proposition demonstrate that a filter function is a proximity operator of a weakly strictly convex and weakly coercive function $\qone$, and additionally, $\qone$ is continuously differentiable.

\begin{proposition}\label{prop:prox}
The following assertions hold: 
\begin{enumerate}[itemindent =2em, leftmargin =1em]
\item Let $\ph \colon \R \rightarrow \R$ be bijective and strictly increasing function with $\ph(0)=0$. Then, $\ph = (\id +\qone')^{-1}(x)$, where $\qone \colon \R \rightarrow \R$ is continuously differentiable, $\qone(0)=0$, weakly  strictly convex and weakly coercive.
Especially, we can calculate $\qone$ by $\qone(x) = \int_0^x \ph^{-1}(y) \diff y - \frac{x^2}{2}$ for $x \in \R$. (Note that for negative $x$, we define $\int_{0}^x \ph 
 \coloneqq -\int_{x}^0 \ph$.)
\item Let $\qone \colon \R \rightarrow \R$ be continuously differentiable, $\qone(0)=0$, weakly strictly convex and weakly coercive. Then, $(\id + \qone')^{-1}$ is well defined and we have 
\[\prox_\qone(x)=(\id +\qone')^{-1}(x).\]
\end{enumerate}
\end{proposition}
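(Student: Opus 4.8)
The plan is to prove both parts by exhibiting the explicit correspondence between $\ph$ and $\qone$ through the relation $\qone' = \ph^{-1} - \id$, and then to verify each listed property by elementary calculus, letting the auxiliary function $h \coloneqq \qone + \tfrac{1}{2}\abs{\cdot}^2$ carry all the convexity and coercivity information.

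For the first assertion, I would start from the observation that a strictly increasing bijection $\ph \colon \R \to \R$ admits an inverse $\ph^{-1}$ that is itself a strictly increasing bijection of $\R$, is continuous (as the inverse of a continuous strictly monotone map on an interval), and satisfies $\ph^{-1}(0)=0$. I would then take the proposed $\qone(x) = \int_0^x \ph^{-1}(y)\,\diff y - x^2/2$ as a definition: continuity of $\ph^{-1}$ makes the integral well defined, and $\qone(0)=0$ is immediate. The fundamental theorem of calculus gives $\qone'(x) = \ph^{-1}(x) - x$, which is continuous, so $\qone \in C^1$. The decisive step is to differentiate $h = \qone + \tfrac{1}{2}\abs{\cdot}^2$: one finds $h' = \ph^{-1}$ and $h(x) = \int_0^x \ph^{-1}(y)\,\diff y$. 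Since $\ph^{-1}$ is strictly increasing, $h'$ is strictly increasing and hence $h$ is strictly convex, i.e. $\qone$ is weakly strictly convex. Weak coercivity then follows because bijectivity of $\ph^{-1}$ forces $\ph^{-1}(y) \to \pm\infty$ as $y \to \pm\infty$, so the integral $h(x)$ diverges to $+\infty$ as $\abs{x}\to\infty$. Finally, $\id + \qone' = \ph^{-1}$ inverts to $\ph$, giving $\ph = (\id+\qone')^{-1}$.

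For the second assertion, I would first note that weak strict convexity makes $h = \qone + \tfrac{1}{2}\abs{\cdot}^2$ strictly convex and differentiable, so its derivative $\id + \qone' = h'$ is strictly increasing, hence injective, which is exactly what is needed for $(\id+\qone')^{-1}$ to be well defined on its range. To identify it with the proximity operator, I would expand the objective as $\tfrac{1}{2}\abs{x-\tilde x}^2 + \qone(\tilde x) = h(\tilde x) - x\tilde x + \tfrac{1}{2}x^2$, which is strictly convex and coercive in $\tilde x$; a unique minimizer $\tilde x^\ast$ therefore exists, and Fermat's rule gives $0 = \tilde x^\ast - x + \qone'(\tilde x^\ast)$, i.e. $(\id+\qone')(\tilde x^\ast) = x$. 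Inverting yields $\prox_\qone(x) = \tilde x^\ast = (\id+\qone')^{-1}(x)$.

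The main obstacle, and the only step beyond bookkeeping, is the coercivity analysis, which is what ties the two parts together. In the first part I must convert bijectivity of $\ph$ into genuine divergence of $\int_0^x \ph^{-1}$, and in the second part I must ensure the proximity objective actually attains its infimum on the relevant domain. The cleanest way to reconcile these is to keep track of the range of $\id + \qone'$: when $\qone$ arises from the first part the identity $\id + \qone' = \ph^{-1}$ is surjective onto $\R$, so $(\id+\qone')^{-1}$ is defined everywhere and the first-order condition can always be solved. I would therefore make this surjectivity explicit rather than rely on coercivity alone, since a merely coercive $h$ need not have a derivative ranging over all of $\R$, and this is the point where the argument requires the most care.
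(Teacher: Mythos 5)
Your argument follows essentially the same route as the paper's: part 1 is the paper's construction verbatim (define $\qone$ by the integral formula, apply the fundamental theorem of calculus, read off strict convexity of $h=\qone+\tfrac12\abs{\cdot}^2$ from strict monotonicity of $h'=\ph^{-1}$, and get coercivity from the divergence of $\ph^{-1}$), and part 2 is the same first-order-condition computation. The point where you diverge is your insistence on tracking the surjectivity of $\id+\qone'$, and this is not pedantry: it exposes a genuine gap in the paper's own proof. The paper's part 2 opens with the bare assertion that weak strict convexity and weak coercivity of $\qone$ make $\id+\qone'$ bijective; that is false in general. Take $\qone(x)=\sqrt{1+x^2}-1-x^2/2$: it is smooth, $\qone(0)=0$, and $h(x)=\qone(x)+x^2/2=\sqrt{1+x^2}-1$ is strictly convex and coercive, yet $h'(x)=x/\sqrt{1+x^2}$ has range $(-1,1)$, so $(\id+\qone')^{-1}$ is not defined on all of $\R$. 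Worse, for $\abs{x}\geq 1$ the proximity objective $\tilde x\mapsto h(\tilde x)-x\tilde x+x^2/2$ attains no minimum (its infimum is $-\infty$ when $\abs{x}>1$), so $\prox_\qone(x)$ itself is undefined: coercivity of $h$ does not imply coercivity of $\tilde x \mapsto h(\tilde x)-x\tilde x$, which is what existence of a minimizer actually requires. The same unproved claim appears in the remark following Definition \ref{Def:GProx}.

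Your proposed repair — making surjectivity of $\id+\qone'$ an explicit hypothesis (equivalently, strengthening weak coercivity to supercoercivity of $h$, i.e.\ $h(x)/\abs{x}\to\infty$ as $\abs{x}\to\infty$) and observing that it holds automatically for every $\qone$ produced by part 1, since there $\id+\qone'=\ph^{-1}$ is a bijection of $\R$ — is exactly the right fix, and it is all the paper ever needs: the regularizers $\qone_{\al,\la}$ used downstream always arise from filters via part 1. So the two proofs take the same path, but yours is the correct one; part 2 as stated cannot be proved without the additional surjectivity assumption.
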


\begin{proof}
\begin{enumerate}[itemindent =2em, leftmargin =1em]
\item Given that $\ph$ is bijective and strictly increasing, it is continuous. Hence, $\ph^{-1}$ is bijective, strictly increasing and continuous as well.  Define $\qone(x) \coloneqq \int_0^x \ph^{-1}(y) \diff y - \frac{x^2}{2}$ for all $x \in \R$. 
By the fundamental theorem of calculus, $\qone$ is continuously differentiable on $\R$ with $\qone'(x) = \ph^{-1}(x)-x$ for all $x \in \R$. It follows that $\ph(x) = (\id + \qone')^{-1}(x)$ for all $x \in \R$, and by definition, $s(0)=0$. Now, observe that 
$\ph^{-1}=\qone' + \id$ which is strictly increasing and bijective. This shows that $s+\abs{\cdot}^2/2$ is strictly convex and coercive.

\item Given that $s$ is weakly strictly convex and weakly coercive, $\id + s'$ is bijective. 
Further, we have 
\begin{align*}
\prox_s (x) = \underset{\tilde{x} \in \R}{\argmin} \{ \frac{1}{2} \abs{x- \tilde{x}}^2 +\qone(\tilde{x}) \} &\subseteq \{ \tilde{x} \in \R \mid (\tilde{x}-x) + \qone'(\tilde{x}) =0 \} \\
&= \{ \tilde{x} \in \R \mid \tilde{x} + \qone'(\tilde{x}) = x \} = (\id + \qone')^{-1}(x).
\end{align*}
\end{enumerate}
\end{proof}

Now we can show that the non-linear filtered DFD reduces to variational regularization with a general type of regularizing term, a so-called $\kao$-regularizer. 

\begin{definition}[$\kao$-Regularizer]
Let $(\ph_\al)_{\al>0}$ satisfy \ref{def:filter1}-\ref{def:filter3}. For all $\al > 0$ and $\la \in \La$ we define $\qone_{\al,\la}(x) \coloneqq \int_0^x \ph_\al(\ka_\la,\cdot)^{-1}(y) \diff y - x^2/2$. By Proposition \ref{prop:prox} the proximity operator of $\qone_{\al,\la}$ is well defined and we have $\ph_{\al}(\ka_\la,\cdot) = \prox_{\qone_{\al,\la}}$. We call $(\Rc_\al)_{\al >0}$ defined by $\Rc_\al(x) \coloneqq \sum_{\la \in \La} \qone_{\al,\la}(\ka_\la x_\la)$ the $\kao$-regularizer defined by the filter $(\ph_\al)_{\al>0}$ and the weight vector $\kao$. Further, we call a $\kao$-regularizer stationary if $\Rc_\al= \al \Rc_1$ and non-stationary otherwise.
\end{definition}

\begin{lemma}\label{lem_main}
Let $(\ph_\al)_{\al>0}$ satisfy \ref{def:filter1}-\ref{def:filter3}. 
Then, for all $z \in \dom(\Mo_{\kao}^+ \circ \Phi_{\al,\kao})$ we have
\begin{align*}
\Mo^+_\kao \circ \,\Phi_{\al, \kao} (z) & = \underset{x \in \ell^2}{\argmin} \{\frac{1}{2} \norm{\Mo_\kao x -z }^2 + \Rc_\al(x) \}.
\end{align*}
\end{lemma}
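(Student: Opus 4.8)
The plan is to exploit that both the data-fidelity term and the $\kao$-regularizer are separable across the index set $\La$, so that the infinite-dimensional minimization collapses into a family of one-dimensional problems, each of which is exactly a generalized proximity evaluation. First I would expand the objective $\Tc_{\al,z}(x) \coloneqq \frac{1}{2}\norm{\Mo_\kao x - z}^2 + \Rc_\al(x)$ using the componentwise action of $\Mo_\kao$ and the definition $\Rc_\al(x) = \sum_{\la} \qone_{\al,\la}(\ka_\la x_\la)$, giving $\Tc_{\al,z}(x) = \sum_{\la} g_\la(\ka_\la x_\la)$ with the one-dimensional functions $g_\la(w) \coloneqq \frac{1}{2}(w - z_\la)^2 + \qone_{\al,\la}(w)$.

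For each fixed $\la$, the substitution $w = \ka_\la x_\la$ (a bijection of $\R$ onto itself, since $\ka_\la > 0$) turns the $\la$-th summand into the objective defining $\prox_{\qone_{\al,\la}}(z_\la)$. Since $(\ph_\al)_{\al}$ satisfies \ref{def:filter1}-\ref{def:filter3}, Proposition \ref{prop:prox} applies to each $\qone_{\al,\la}$: the objective $g_\la$ is strictly convex and coercive, and hence has the unique minimizer $w_\la^* = \prox_{\qone_{\al,\la}}(z_\la) = \ph_\al(\ka_\la, z_\la)$. Dividing by $\ka_\la$, the componentwise minimizer in the original variable is $x_\la^* = \ph_\al(\ka_\la, z_\la)/\ka_\la$, and stacking these over $\la$ recovers exactly $(\Mo^+_\kao \circ \Phi_{\al,\kao})(z)$. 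The hypothesis $z \in \dom(\Mo^+_\kao \circ \Phi_{\al,\kao})$ is precisely what guarantees $x^* = (x_\la^*)_\la \in \ell^2$, so the candidate is feasible.

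It remains to promote these componentwise minimizers to a genuine minimizer of the joint $\ell^2$-problem. Rather than compare the two (possibly non-summable) objective values directly, I would estimate the difference term by term: for any $x \in \ell^2$, $\Tc_{\al,z}(x) - \Tc_{\al,z}(x^*) = \sum_{\la}\ekl{g_\la(\ka_\la x_\la) - g_\la(w_\la^*)}$, where every summand is nonnegative because $w_\la^*$ minimizes $g_\la$. Hence the difference is a well-defined element of $[0,\infty]$, is nonnegative, and vanishes only when $\ka_\la x_\la = w_\la^*$ for every $\la$; strict convexity of each $g_\la$ then forces $x = x^*$, establishing $x^*$ as the unique minimizer.

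The main obstacle is the bookkeeping forced by the fact that, under the relaxed conditions, $\qone_{\al,\la}$ may take negative values, so $\Rc_\al$ is not a sum of nonnegative terms and neither objective value is a priori an absolutely convergent series. The nonnegative-difference formulation above is designed to sidestep this, since it only ever manipulates the manifestly nonnegative quantities $g_\la(\ka_\la x_\la) - g_\la(w_\la^*)$. To confirm that $\Tc_{\al,z}(x^*)$ is itself finite (and not $-\infty$), I would invoke the tangent-line bound $\qone_{\al,\la}(w) \geq -\tfrac{1}{2}w^2$, which follows from convexity of $\qone_{\al,\la} + \tfrac{1}{2}\abs{\cdot}^2$ together with $\qone_{\al,\la}(0) = 0$ and $\qone_{\al,\la}'(0) = 0$; combined with $z \in \ell^2$ this controls the objective from below and makes the argmin statement well-posed.
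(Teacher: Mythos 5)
Your proposal is correct and takes essentially the same route as the paper's proof: exploit the separability of the objective, identify each one-dimensional minimizer as the generalized proximity evaluation $\prox_{\qone_{\al,\la}}(z_\la) = \ph_\al(\ka_\la, z_\la)$ via Proposition~\ref{prop:prox}, and stack the components. Your nonnegative-difference argument and the tangent-line bound $\qone_{\al,\la}(w) \geq -\tfrac{1}{2}w^2$ merely make rigorous the summability and stacking step that the paper's proof passes over silently, so this is added care rather than a different method.
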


\begin{proof}
For all $\la \in \La$, we have 
\begin{align*}
\underset{x \in \R}{\argmin} \{ \frac{1}{2} \abs{\ka_\la x -z_\la}^2 + \qone_{\al,\la}(\ka_\la x)\} &= \frac{1}{\ka_\la}\underset{x \in \R}{\argmin} \{ \frac{1}{2} \abs{x -z_\la}^2 + \qone_{\al,\la}(x)\} \\ 
&= \frac{1}{\ka_\la} \prox_{\qone_{\al,\la}}(z_\la) = \frac{1}{\ka_\la} \ph_\al(\ka_\la,z_\la),
\end{align*}
which is a unique minimizer.
Since $\Mo_\kao^+ \circ \Phi_{\al, \kao} (z) \in \ell^2(\La)$, we have 
\begin{align*}
\Mo_\kao^+ \circ \Phi_{\al, \kao} (z)
&=\left( \frac{1}{\ka_\la} \ph_\al(\ka_\la,z_\la)\right)_{\la \in \La}\\
&= \left(\underset{x_\la \in \R}{\argmin}\{ \frac{1}{2} \abs{\ka_\la x_\la -z_\la }^2 + \qone_{\al,\la}(\ka_\la x_\la) \} \right)_{\la \in \La} \\
&= \underset{(x_\la)_{\la \in \La} \in \ell^2(\La)}{\argmin} \{ \sum_{\la \in  \La} \frac{1}{2} \abs{\ka_\la x_\la -z_\la }^2 + \qone_{\al,\la}(\ka_\la x_\la) \} \\
&=\underset{x \in \ell^2}{\argmin} \{\frac{1}{2} \norm{\Mo_\kao x -z }^2 +\Rc_\al(x) \}. 
\end{align*}
\end{proof}

Note that, since the $\Rc_\al$ involves the quasi-singular values $\kao$, the regularizing term, unlike classical variational regularization, depends on the operator $\Ao$. However, this connection gives us a better understanding of how damping of frame coefficients affects the regularization process. Furthermore, $\kao$-regularizers are separable functionals composed of an infinite sum of weakly convex differentiable functions. As a result, we can demonstrate that they are Gateaux differentiable, with the proof provided in the Appendix.

\begin{proposition}[Differentiability of $\Rc_\al$] \label{prop:diff_R}
Let $\al>0$ and define 
$\dom(\Rc_\al') \coloneqq \{x \in \ell^2(\La) \mid (\ka_\la \qone_{\al,\la}'(\ka_\la x_\la))_\la \in \ell^2(\La) \}.$
Then, $\Rc_\al$ is Gateaux differentiable on $\dom(\Rc_\al)^\circ \, \cap \, \dom(\Rc_\al')$ with gradient ${\nabla \Rc_\al(x) = (\ka_\la \qone_{\al,\la}'(\ka_\la x_\la))_\la}$.
\end{proposition}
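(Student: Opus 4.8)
The plan is to compute, for a fixed direction $h \in \ell^2(\La)$, the limit of the difference quotient $(\Rc_\al(x+th)-\Rc_\al(x))/t$ as $t \to 0$ and to identify it with $\inner{(\ka_\la \qone_{\al,\la}'(\ka_\la x_\la))_\la}{h}$. Writing the quotient termwise as $\sum_{\la} \frac{1}{t}(\qone_{\al,\la}(\ka_\la x_\la + t \ka_\la h_\la) - \qone_{\al,\la}(\ka_\la x_\la))$, each summand converges, by the continuous differentiability of $\qone_{\al,\la}$ established in Proposition \ref{prop:prox}, to $\ka_\la h_\la \qone_{\al,\la}'(\ka_\la x_\la)$, which is exactly the $\la$-th contribution to the claimed gradient. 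The whole difficulty is therefore to justify exchanging the limit $t\to0$ with the infinite sum over $\la$.

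The main device is the weak convexity of the $\qone_{\al,\la}$. I would write $\qone_{\al,\la} = \psi_{\al,\la} - \tfrac12 \abs{\cdot}^2$, where by Proposition \ref{prop:prox} the function $\psi_{\al,\la} \coloneqq \qone_{\al,\la} + \tfrac12\abs{\cdot}^2$ is continuously differentiable with $\psi_{\al,\la}' = \ph_\al(\ka_\la,\cdot)^{-1}$ strictly increasing, hence convex. The quadratic part contributes $\sum_\la \ka_\la^2 x_\la h_\la + O(t)$, which is unconditionally summable since $(\ka_\la)_\la$ is bounded and $x, h \in \ell^2(\La)$, so it causes no trouble. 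For the convex part, I would use the standard fact that for a convex function the slope $q_\la(t) \coloneqq (\psi_{\al,\la}(\ka_\la x_\la + t\ka_\la h_\la) - \psi_{\al,\la}(\ka_\la x_\la))/t$ is nondecreasing in $t$ on $\R\setminus\{0\}$. Choosing $t_0 > 0$ small enough that $x \pm t_0 h \in \dom(\Rc_\al)$ — possible precisely because $x \in \dom(\Rc_\al)^\circ$ — this monotonicity sandwiches $q_\la(t)$ between $q_\la(-t_0)$ and $q_\la(t_0)$ for all $t \in [-t_0,t_0]\setminus\{0\}$, and since the two-sided limit $\ka_\la h_\la \psi_{\al,\la}'(\ka_\la x_\la)$ lies between them, one obtains the uniform bound $\abs{q_\la(t) - \ka_\la h_\la \psi_{\al,\la}'(\ka_\la x_\la)} \le q_\la(t_0) - q_\la(-t_0) =: M_\la$.

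It then remains to check that $(M_\la)_\la$ is summable and to run dominated convergence for series. Summability of $\sum_\la M_\la$ follows by telescoping back to $\Rc_\al$: up to the summable quadratic correction, $\sum_\la q_\la(\pm t_0)$ equals $\pm t_0^{-1}(\Rc_\al(x \pm t_0 h) - \Rc_\al(x))$, and all four values $\Rc_\al(x), \Rc_\al(x \pm t_0 h)$ are finite by the choice of $t_0$. (Here it is convenient to note $\psi_{\al,\la} \ge 0$, since $\psi_{\al,\la}'(0) = \ph_\al(\ka_\la,\cdot)^{-1}(0) = 0$ by \ref{def:filter3}, so $\Rc_\al$ is bounded below on $\ell^2(\La)$ and its value on $\dom(\Rc_\al)$ is an honest convergent series.) The limiting term $\sum_\la \ka_\la h_\la \psi_{\al,\la}'(\ka_\la x_\la)$, after subtracting the quadratic part, collapses to $\sum_\la \ka_\la h_\la \qone_{\al,\la}'(\ka_\la x_\la)$, which converges absolutely by Cauchy–Schwarz using $x \in \dom(\Rc_\al')$, i.e.\ $(\ka_\la \qone_{\al,\la}'(\ka_\la x_\la))_\la \in \ell^2(\La)$, and $h \in \ell^2(\La)$. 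Passing to the limit gives the directional derivative $\inner{(\ka_\la\qone_{\al,\la}'(\ka_\la x_\la))_\la}{h}$; being linear and bounded in $h$, this exhibits $\Rc_\al$ as Gateaux differentiable at $x$ with the stated gradient. I expect the interchange of limit and summation — concretely, producing the summable majorant $M_\la$ from the convexity monotonicity — to be the crux, the rest being bookkeeping.
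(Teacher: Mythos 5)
Your proposal is correct and takes essentially the same route as the paper: both exploit the weak-convexity decomposition, working with the convex functions $\qone_{\al,\la}+\tfrac12\abs{\cdot}^2$ (whose derivative is $\ph_\al(\ka_\la,\cdot)^{-1}$) plus a harmless quadratic correction, and both justify the interchange of limit and sum via the monotonicity of difference quotients of convex functions, using $x \in \dom(\Rc_\al)^\circ$ for finiteness nearby and $x \in \dom(\Rc_\al')$ to make the candidate gradient a bounded functional. The only difference is technical packaging: the paper isolates a lemma on separable convex functions and applies the monotone convergence theorem, while you construct an explicit summable majorant $M_\la$ from the slope sandwich and apply dominated convergence.
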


Example \ref{ex:like_learned} considers filter functions that are artificially constructed such that they align with our experimental evaluation and investigates how they affect the one-dimensional regularizing functions $\qone_{\al,\la}$.

\begin{figure}[htb!]
\centering
\begin{tikzpicture}[x=1.5cm, y=1.5cm]
\draw[->] (-0.5, 0) -- (3, 0) node[right] {$x$};
\draw[->] (0, -0.5) -- (0, 3) node[left] {$\ph_{\al}(\ka_\la,\cdot)$};
\draw[dashed] (-0.5, -0.5) -- (3, 3);
\draw[domain=-0.5:1/2,variable=\x, line width=0.4mm, pink] plot ({\x},{\x*\x*\x*3});
\draw[domain=1/2:3,variable=\x, line width=0.4mm, pink] plot ({\x},{1/8*9^(2/3)*3*((\x-7/18)^(2/3)-(1/2-7/18)^(2/3)) +(1/8)*3});
\draw[domain=-0.5:1,variable=\x, line width=0.4mm,  brown] plot ({\x},{\x*\x*\x*3/4});
\draw[domain=1:3,variable=\x, line width=0.4mm,  brown] plot ({\x},{sqrt(6)*3/4*((\x-5/6)^(1/2)-(1-5/6)^(1/2))+3/4});
\draw[domain=-0.5:2,variable=\x, line width=0.4mm,  red] plot ({\x},{\x*\x*\x*1/8});
\draw[domain=2:3,variable=\x, line width=0.4mm,  red] plot ({\x},{(2/9)^(-1/3)*((\x-16/9)^(1/3)-(2-16/9)^(1/3)) +1});
\draw[domain=-0.5:3,variable=\x, line width=0.4mm,  orange] plot ({\x},{\x*\x*\x*1/32});
\end{tikzpicture}
\hfill 
\begin{tikzpicture}[x=1.5cm, y=1.5cm]
\draw[->] (-0.5, 0) -- (3, 0) node[right] {$x$};
\draw[->] (0, -0.5) -- (0, 3) node[left] {$\qone_{\al,\la}$};
\draw[dashed] (-0.5, -0.5) -- (3, 3);
\draw[domain=-0.5:(3/8),variable=\x, line width=0.4mm,  pink] plot ({\x},{(-\x*\x*1/2+(3/4)*(1/3)^(1/3)*(\x*\x)^(2/3))});
\draw[domain=(3/8):3,variable=\x, line width=0.4mm,  pink] plot ({\x},{2/5*3/8*(9^2)^(1/3)*(((\x-3/8)*8/3*(1/9)^(2/3)+((1/9)^2)^(1/3))^(5/2)-((1/9))^(5/3))+7/18*(\x-3/8)-1/2*(\x^2-(3/8)^2)+(-(3/8)^2*1/2+(3/4)*(1/3)^(1/3)*((3/8)^2)^(2/3))});
\draw[domain=-0.5:(3/4),variable=\x, line width=0.4mm,  brown] plot ({\x},{(-\x*\x*1/2+(3/4)*(4/3)^(1/3)*(\x*\x)^(2/3))});
\draw[domain=(3/4):3,variable=\x, line width=0.4mm,  brown] plot ({\x},{sqrt(6)/4*(((\x-3/4)*4/(3*sqrt(6)) +1/sqrt(6))^3-(1/sqrt(6))^3)+5/6*(\x-3/4)-1/2*(\x^2-(3/4)^2)+(-(3/4)^2*1/2+(3/4)*(4/3)^(1/3)*((3/4)^2)^(2/3))});
\draw[domain=-0.5:1,variable=\x, line width=0.4mm,  red] plot ({\x},{(-\x*\x*1/2+(3/4)*8^(1/3)*(\x*\x)^(2/3))});
\draw[domain=1:2.4,variable=\x, line width=0.4mm,  red] plot ({\x},{1/4*((2/9)^(-1/3))*(((\x-1)*((2/9)^(1/3))+(2/9)^(1/3))^4-(((2/9)^2)^(2/3)))+16/9*(\x-1)-1/2*(\x^2-1)+(-1/2+(3/4)*8^(1/3))});
\draw[domain=-0.5:27/32,variable=\x, line width=0.4mm,  orange] plot ({\x},{(-\x*\x*1/2+(3/4)*32^(1/3)*(\x*\x)^(2/3))});
\draw[domain=27/32:1.4,variable=\x, line width=0.4mm,  orange] plot ({\x},{(27/32)*(sqrt(2)/5)*(((\x-27/32)*(32/(27*sqrt(2)))+(1/sqrt(2)))^5-(1/sqrt(2))^5)+11/4*(\x-(27/32))-1/2*(\x^2-(27/32)^2)+ (-(27/32)^2*1/2+(3/4)*32^(1/3)*((27/32)^2)^(2/3))});
\end{tikzpicture}
\caption{On the left filter functions of example \ref{ex:like_learned} for different $\alpha$ and fixed $\kappa$ are plotted, along with their corresponding regularizing functions $\qone_{\al,\la}$ on the right. These are inspired by the learned filter functions in Figure \ref{fig:learned_filters}. } \label{fig:like_learned}
\end{figure}
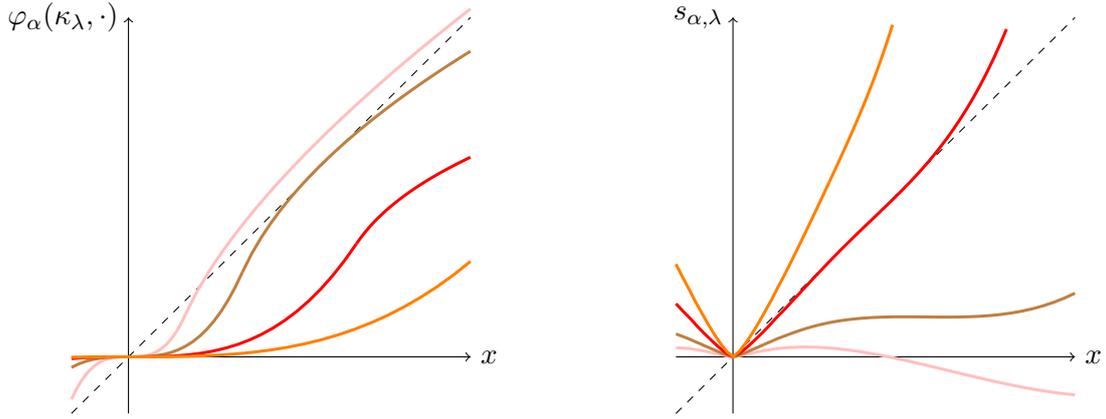 

\begin{example}\label{ex:like_learned}
Let $\ka_\la$ be fixed. Consider the filter function
\begin{align*}
    \ph_\al(\ka_\la, x) =  \begin{cases}
        \frac{1}{3\al^2} x^3 & \abs{x} \leq \al  \\
        \frac{\al}{3} \left(\frac{3(1 +\al)}{\al} x- (2+ 3\al)\right)^{\frac{1}{\al + 1}} & \text{ otherwise. }
    \end{cases}
\end{align*}
This filter function fulfills the conditions \ref{def:filter1}-\ref{def:filter4}. It is inspired by the learned filter functions shown in Figure \ref{fig:learned_filters} and is plotted in Figure \ref{fig:like_learned} on the left for different parameters $\alpha$. On the right, the corresponding regularizing function $\qone_{\alpha,\lambda}$ is plotted. We observe that the regularizing function does not have to be convex and it is possible for it to become negative.
\end{example}

\section{Learned Filters} \label{sec:learnt}

\subsection{DFD for the Radon Transform}
\label{sec:DFD}
We examine the behavior of learned regularizing filters using the example of the two dimensional Radon transform $\Ro: L^2(B_1(0)) \rightarrow L^2(S^1 \times \R)$, which is defined by 
\[ \Ro f(\omega,s) \coloneqq \int_\R f(s \omega +t \omega^\perp) dt, \quad (\omega,s) \in S^1 \times \R, \]
where $B_1(0) = \{x \in \R^2 : \norm{x} \leq 1 \}$ and $S^1 = \{x \in \R^2 : \norm{x} =1 \}$. Consider a two dimensional wavelet orthonormal bases  $(w_\la)_{\la \in \La}$ of $L^2(\R)$ with compact support. The index $\la$ consists of three components $(j,k,\beta)$, with scale index $j \in \Z$, shift index $k \in \Z^2$ and $\beta \in \{1,2,3\}$ indicating the three mother wavelets. By defining 
\[ u_\la(x)\coloneqq \begin{cases} 
    w_\la(x) \quad & x \in B_1(0)\\
    0 \quad & \text{otherwise,} 
\end{cases}\]
one gets a self dual tight frame for $L^2(B_1(0))$. Further, the standard filtered backprojection formula (FBP) for inverting the Radon transform is given by 
\begin{align*}
     \operatorname{FBP}(g) \coloneqq (4 \pi)^{-1} (\Ro^* \circ \mathcal{I}_1)(g), 
\end{align*}
with Riesz potential $\mathcal{I}_1(g) \coloneqq \Fo^{-1}( |\cdot | \Fo g)$, where $\Fo$ denotes the Fourier transform. It can be shown \cite{donoho1995nonlinear} that when defining 
\begin{align*}
v_\la &\coloneqq 2^{j/2} (4 \pi)^{-1} \mathcal{I}_1 \Ro u_\la \\
\kappa_\la &\coloneqq 2^{j/2}
\end{align*}
$(u_\la, v_\la, \kappa_\la)_{\la \in \La}$ forms a DFD for the two dimensional Radon transform.  Making use of the fact that 
\[ \sinner{g}{v_\la}= \ka_\la \sinner{\operatorname{FBP(g)}}{u_\la}, \]
for $g \in L^2(S^1 \times \R)$ we get the reconstruction method
\begin{align}
\label{eq:inv}
    \Fc(g) = \sum_{\la \in \La} \frac{1}{\kappa_\la} \ph(\kappa_\la, \ka_\la \sinner{\operatorname{FBP}(g)}{u_\la}) u_\la,
\end{align}
where $\ph: \R^+ \times \R \rightarrow \R$ is a weakly convex regularizing filter. Our goal is now to learn these filters in a data-driven way and thus to represent them using neural networks.

\subsection{Training Details}

In our experiments, we investigated data-driven frame-based image reconstruction of the two dimensional Radon transform, meaning we learned the non linear regularizing filters in equation \eqref{eq:inv} directly from data. For this purpose, we used a publicly available dataset \cite{soares2020sars} of real patients CT scans which was originally built for SARS-CoV-2 identification. From this dataset, we extracted a training data set of $N= 400$ CT images of healthy patients. We chose equidistant discretizations $\omega_1, \dots \omega_{512}, s_1, \dots s_{363}$ and calculated the sinograms $y_i= (\Ro x_i (\omega_j, s_k))_{j,k=1}^{512,363} \in\R^{512 \times 363}$ of the training images $x_i \in \R^{256 \times 256}$. Further, we corrupted them with noise of different noise levels $\delta$, obtaining noisy sinograms $y_i^\delta$.

To get a DFD of the two dimensional Radon transform, we chose the Haar wavelets and defined $(u_\la, v_\la, \kappa_\la)_{\la \in \La}$ as explained in section \ref{sec:DFD}. Now consider a neural network with $\varphi_\theta: \R^+ \times \R \rightarrow \R$ with learnable parameters $\theta \in \R^m$. Our goal is to find a reconstruction method such that the mean squared reconstruction error is minimized on the training dataset. This means for a given noise level $\delta$, we have to solve the minimization problem:
\begin{align*}
\underset{\theta \in \R^m}\argmin \frac{1}{N} \sum_{i=1}^N \norm{x_i - \Fc_\theta(y_i^\delta)}_2^2,   
\end{align*}
where
\begin{align*}
    \Fc_\theta(y^\delta) =  \sum_{\la \in \La} \frac{1}{\kappa_\la} \ph_\theta(\kappa_\la, \ka_\la \sinner{\operatorname{FBP}(y^\delta)}{u_\la}) u_\la.
\end{align*}
Note that because of numerical considerations, we do not learn the weakly convex regularizing filters directly as they appear in formula \eqref{eq:inv}. More precisely we set
 \[
\ph_\theta( \ka_\la, x) \coloneqq \ka_\la \psi_\theta(\ka_\la, x/\ka_\la)
 \]
and for each noise level during training minimize
\begin{align}\label{eq:loss}
    E_\delta(\theta) = \frac{1}{N} \sum_{i=1}^N \norm{x_i -\sum_{\la \in \La} \psi_\theta(\ka_\la, \sinner{\operatorname{FBP}(y_i^\delta)}{u_\la}) u_\la}^2,
\end{align}
where $\psi_\theta: \R^+ \times \R \rightarrow \R$ is a neural network consisting of five fully connected layers. We trained the networks for $100$ epochs and validated the performance of the models after each epoch on a validation set of $100$ images. The final network was chosen as the network with the best validation loss over the $100$ epochs. The deep learning framework was implemented in Python using the PyTorch library. For optimization, we used the AdamW optimizer with an exponentially decaying learning rate. After approximate minimizers $\hat{\theta}(\delta)$ of \eqref{eq:loss} were found during training phase, the final regularization method is given by
\[ \Fc_{\hat{\theta}(\delta)}(y^\delta) = \sum_{\la \in \La} \frac{1}{\kappa_\la} \ph_{\hat{\theta}(\delta)}(\kappa_\la, \ka_\la \sinner{\operatorname{FBP}(y^\delta)}{u_\la}) u_\la.\]
The full Python code for learning the weakly convex regularizing filters will be publicly available at \url{https://github.com/matthi99/learned-filters.git}.

 \begin{figure}
    \centering
    \includegraphics[width = .8\textwidth]{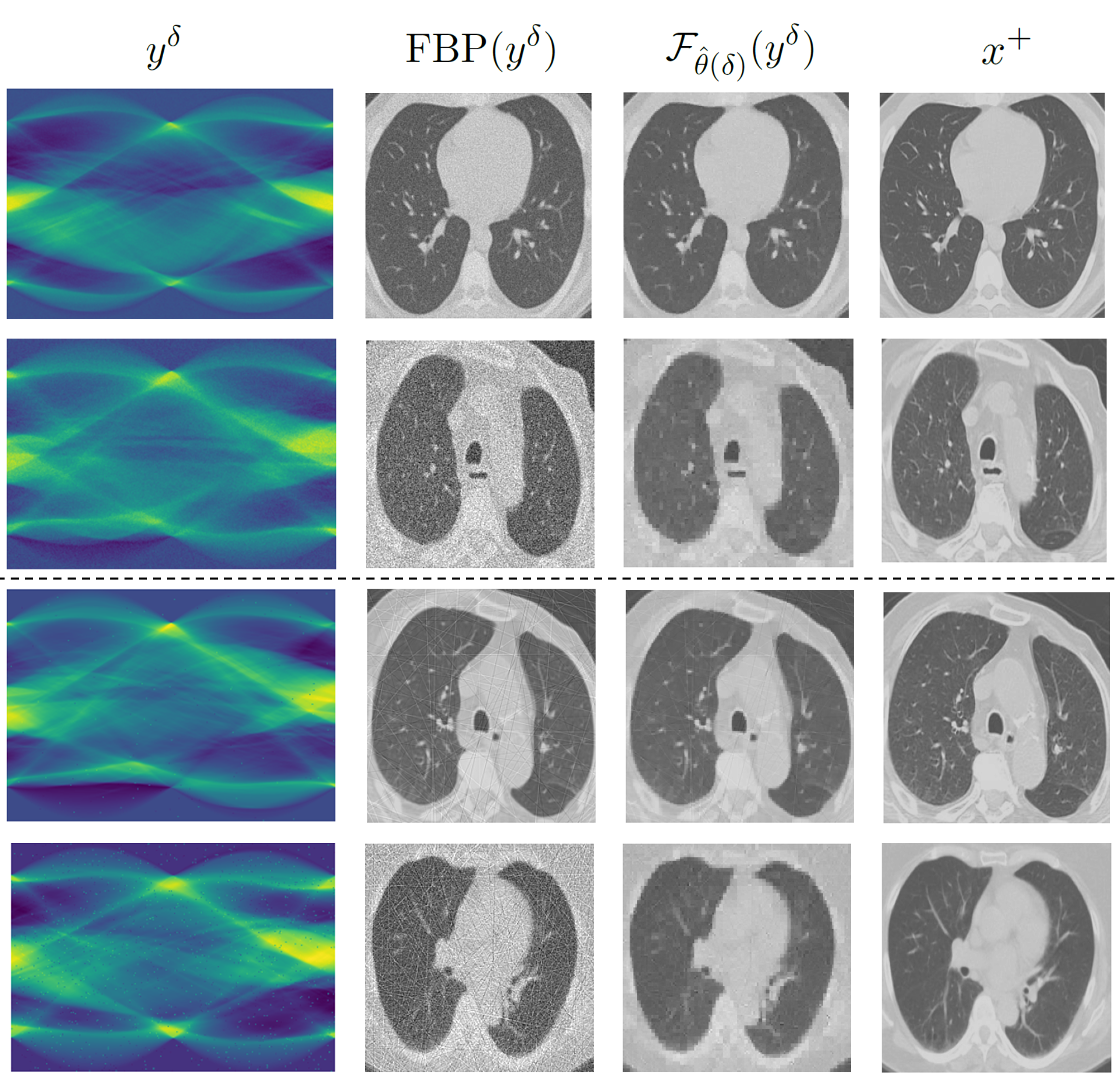}
\caption{Comparison of ground truth images $x^+$, filtered back projections $\operatorname{FBP}(y^\delta)$, and learned reconstructions $\Fc_{\hat{\theta}(\delta)}(y^\delta)$. Sinograms $y^\delta$ were corrupted with Gaussian noise (top two rows) and Salt$\&$Pepper noise (bottom two rows) with noise levels $\delta = 4$ (rows 1 and 3) and $\delta = 12$ (rows 2 and 4).  }
    \label{fig:example}
\end{figure}

\begin{table}[!b]
    \centering
    \begin{tabular}{l c c c c c c c c}
    Noise       & \multicolumn{2}{c}{Gaussian}         & \multicolumn{2}{c}{Poisson}        & \multicolumn{2}{c}{Uniform}       & \multicolumn{2}{c}{Salt\&Pepper}\\
                & FBP  & $\Fc_{\hat{\theta}(\delta)}$  & FBP &$\Fc_{\hat{\theta}(\delta)}$  & FBP &$\Fc_{\hat{\theta}(\delta)}$ & FBP &$\Fc_{\hat{\theta}(\delta)}$\\
    \midrule
    $\delta =4$ & $0.008$ &$0.004$                   & $0.009$ & $0.004$                & $0.008$ & $0.004$               & $0.007$ & $0.005$ \\
    $\delta =8$ & $0.039$ &$0.008$                   & $0.044$ & $0.008$                & $0.039$ & $0.007$               & $0.037$ & $0.008$ \\
    $\delta =12$& $0.095$ &$0.011$                   & $0.107$ & $0.012$                & $0.095$ & $0.011$               & $0.092$ & $0.012$ \\
    $\delta =16$& $0.175$ &$0.015$                   & $0.198$ & $0.016$                & $0.175$ & $0.014$               & $0.169$ & $0.015$ \\
    $\delta =20$& $0.279$ &$0.017$                   & $0.316$ & $0.018$                & $0.280$ & $0.017$               & $0.270$ & $0.018$ \\
    $\delta =24$& $0.408$ &$0.020$                   & $0.461$ & $0.021$                & $0.408$ & $0.020$               & $0.394$ & $0.020$ \\
    $\delta =28$& $0.560$ &$0.022$                   & $0.634$ & $0.023$                & $0.561$ & $0.022$               & $0.541$ & $0.023$ \\
    \end{tabular}
    \caption{Mean MSE over a test dataset of 50 images for different noise types and noise levels.}
    \label{tab:MSE}
\end{table}

\subsection{Results}
In the numerical experiments, we tested different types of noise like Gaussian noise, Poisson noise, uniformly distributed noise, and salt and pepper noise. We also investigated different noise levels $\delta \coloneqq 1/\sqrt{n} \norm{y-y^\delta}_2$, where $n$ is the amount of sinogram pixels, and $\delta \in \{4,8,12,16,20,24,28\}$. Figure \ref{fig:example} shows four examples of learned filter-based reconstructions compared with normal FBP reconstructions and ground truth images $x^+$. The mean squared errors (MSE) achieved on a test set of 50 images for different types of noise and noise levels are displayed in Table \ref{tab:MSE}. It can be seen that for all noise types and levels, the MSE is significantly lower for the learned filter-based reconstructions compared to the normal FBP reconstructions.  

\section{Convergence Analysis} \label{sec_convReg}

In this section, we analyze the stability and convergence of the learned non-linear filtered DFD and provide quantitative estimates based on the absolute symmetric Bregman distance. Let $(\ph_\al)_{\al>0}$ be a weakly convex regularizing filter (see Definition \ref{def:filter}) and the non-linear filtered DFD is given by: 
\begin{equation}
    \Fc_\al (y) 
\coloneqq \sum_{\la \in \La} \frac{1}{\ka_\la}\ph_{\al}(\ka_\la,\sinner{y}{v_\la}) \bar u_\la = 
 \To_{\bar \uo} \circ  \Mo^+_\kao \circ \,\Phi_{\al, \kao} 
\circ \To_{\vo}^* y.
\end{equation}
Our analysis focuses on the family of diagonal operators $(\Mo_\kao^+ \circ \Phi_{\al,\kao})_{\al > 0}$, leveraging the continuity of $\To_{\bar \uo}$ and $\To_{\vo}^\ast$. To establish stability and convergence, we introduce additional assumptions on the weakly convex regularizing filter $(\ph_\al)_{\al>0}$, ensuring its smallness in a neighborhood of zero relative to $\ka$ as $\al$ tends to zero. 
Additionally, for establishing a convergence rate, we assume that the $\kao$-regularizer $\Rc_\al$ defined by $(\ph_\al)_{\al>0}$ lies within a specific neighborhood of a stationary $\kao$-regularizer $\al \Qc$.

\subsection{Convergent Regularization Method}

We show that the non-linear filtered DFD is a convergent regularization method in a weak sense. This means that for a fixed $\al>0$, the reconstruction process $\Fc_\al$ is weak continuous, and for $y^k \rightarrow y \in \ran(\Ao)$ and a certain sequence $\al_k \rightarrow 0$ we have $\Fc_{\al_k}(y^k) \rightharpoonup \Ao^+ y$.

\begin{assumption}\label{Ass}
Let $(\ph_{\al})_{\al > 0}$ be weakly convex regularizing filter which satisfies the following conditions:
\begin{enumerate} [label=(A\arabic*), leftmargin =2.5em]
\item \label{ass:A1} $ \exists c, d> 0$ $\forall \ka > 0$  $ \forall \al > 0$ $ \forall x \in \R \colon
\sabs{x} \leq c \al / \ka \Rightarrow \abs{\ph_{\al}(\ka,x)} \leq d \ka/\sqrt{\al} \sabs{x}$.
\item \label{ass:A3} $\exists K>0$ $\forall\ka > 0$ $ \forall \al > 0$ $\forall x \in \R \colon  \abs{\ph_{\al}(\ka,x)} \leq K \sabs{x}$.
\item \label{ass:A2} $\forall\ka > 0$ $ \exists \tilde \al > 0 $ $ \exists g_\ka \colon [0,\infty) \rightarrow [0,\infty)$ bicontinuous such that
\[\forall \al \in (0,\tilde \al) \, \forall x \in \R \colon \abs{\ph_{\al}(\ka,x)} \geq g_\ka(\abs{x}).\]
\end{enumerate}
\end{assumption}

\begin{remark}[Impact of Assumption \ref{Ass} to $\qone_{\al,\la}$] \label{rem:Ass_A}
Suppose assumption \ref{ass:A1} holds. Then, by the definition of the generalized proximity operator, applying \ref{ass:A1} to $\abs{\ph(\ka_\la, \cdot)^{-1}(x)}$ the following implication holds:
\[ \abs{x+\qone'_{\al,\la}(x)} \leq \frac{c \al}{\ka_\la} \Rightarrow \abs{x+\qone'_{\al,\la}(x)} \geq \frac{\sqrt{\al}}{d \ka_\la} \abs{x}.\]
Integration on both sides then results in 
\[ \frac{x^2}{2}+\qone_{\al,\la}(x) \leq \frac{c \al}{\ka_\la} \abs{x} \Rightarrow \frac{x^2}{2} +\qone_{\al,\la}(x) \geq \frac{\sqrt{\al}}{d \ka_\la} \frac{x^2}{2}.\]
At $x= \pm 2 c d \sqrt{\al}$, the two curves $c\al \abs{x}/\ka_\la$  and $\sqrt{\al} x^2 /(2 d \ka_\la)$ intersect, and therefore we have that 
\[
\begin{cases}
    \frac{x^2}{2}+\qone_{\al,\la}(x) \geq \frac{\sqrt{\al} x^2}{2 d \ka_\la}& \abs{x} < 2 c d \sqrt{\al} \\
    \frac{x^2}{2}+\qone_{\al,\la}(x) \geq \frac{c \al}{\ka_\la} \abs{x}& \abs{x} \geq 2 c d \sqrt{\al}. 
\end{cases}
\]
Note that while $x^2/2 + \qone_{\al,\la}(x)$ is non-negative and strictly convex, this is probable not satisfied for $\qone_{\al,\la}$. If $\sqrt{\al}/(d \ka_\la) < 1$, then $s_{\al,\la}$ can be non-positive everywhere. If $\sqrt{\al}/(d \ka_\la) \geq  1$, then $\qone_{\al,\la}(x)$ has to be non-negative for $\abs{x} \leq 2 c \al/\ka_\la$, but can be negative elsewhere.
Assumption \ref{ass:A3} and \ref{ass:A2} essentially mean that $(\qone_{\al,\la}(x))_{\al,\la}$ is bounded from below by $-(K-1)x^2/2$ and for a fixed $\ka_\la$ the family $(\qone_{\al,\la}(x))_{\al \in (0,\tilde \al)}$ is bounded from above for all $x \in \R$.
\end{remark}

\begin{proposition}[Convergence of $\Mo^+_{\kao} \circ \, \Phi_{\al,\kao}$]\label{prop:conv}
Let $(\ph_{\al})_{\al > 0}$ be weakly convex regularizing filter such that Assumption \ref{Ass} is satisfied. Suppose $\al_n, \al > 0$ and $z, z^n \in \ell^2(\La)$ with  $\norm{z-z^n}_2 \leq \delta_n \rightarrow 0$.
\begin{itemize}[leftmargin=2em]
\item  \textit{Existence:} $\dom(\Mo^+_{\kao} \circ \, \Phi_{\al,\kao} )= \ell^2(\La) $.
\item  \textit{Stability:} $\Mo^+_{\kao} \circ \Phi_{\al,\kao} (z^n) \rightharpoonup \Mo^+_{\kao} \circ \, \Phi_{\al,\kao} (z)$.
\item \textit{Convergence:} Assume $z\in \ran(\Mo_{\kao}) \cap \ran(\Phi_{\al,\kao})$ for all $\al \in (0,\tilde \al)$ and some $\tilde{\al}> 0$. Let $\al_n \rightarrow 0$ such that $\al_n \gtrsim \delta_n^2$. Then $\Mo^+_{\kao} \circ \Phi_{\al_n,\kao} (z^n) \rightharpoonup \Mo^+_{\kao} z$.
\end{itemize}
\end{proposition}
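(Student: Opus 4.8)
The plan is to reduce all three assertions to coordinatewise statements about the diagonal map $x^n_\la := \ka_\la^{-1}\ph_{\al}(\ka_\la,z^n_\la)$ and then to upgrade them using the elementary Hilbert-space fact that a norm-bounded sequence in $\ell^2(\La)$ which converges in each coordinate (with respect to the canonical basis) converges weakly to the coordinatewise limit, provided that limit lies in $\ell^2(\La)$. Thus the two things to produce in each part are a uniform $\ell^2$-bound and a coordinatewise limit.

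\emph{Existence and Stability.} Fix $z\in\ell^2(\La)$ and set $\bar\ka:=\sup_\la\ka_\la<\infty$ (finite since $\ka_\la\snorm{u_\la}=\snorm{\Ao^\ast v_\la}$ and frame vectors have bounded norm). Split $\La$ according to whether $\sabs{z_\la}\le c\al/\ka_\la$ or not. On the first set \ref{ass:A1} gives $\ka_\la^{-2}\abs{\ph_\al(\ka_\la,z_\la)}^2\le (d^2/\al)\,z_\la^2$, summable against $\snorm{z}^2$; the complementary set satisfies $\sabs{z_\la}>c\al/\bar\ka$, hence is finite, and there \ref{ass:A3} bounds each term. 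This proves $\dom(\Mo^+_{\kao}\circ\Phi_{\al,\kao})=\ell^2(\La)$. For stability, with $\al$ \emph{frozen} the very same two-region estimate applied to each $z^n$ (with $\snorm{z^n}$ uniformly bounded) gives a bound on $\snorm{\Mo^+_{\kao}\Phi_{\al,\kao}(z^n)}$ uniform in $n$; coordinatewise, $z^n_\la\to z_\la$ and $\ph_\al(\ka_\la,\cdot)$ is continuous by \ref{def:filter1}--\ref{def:filter4}, so each coordinate converges, and the reduction principle yields the weak limit.

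\emph{Convergence, coordinatewise part.} I would first show $\ph_{\al_n}(\ka_\la,z^n_\la)\to z_\la$, i.e. $x^n_\la\to(\Mo^+_{\kao}z)_\la=z_\la/\ka_\la$. Given $\varepsilon>0$, for $n$ large we have $z_\la-\varepsilon\le z^n_\la\le z_\la+\varepsilon$, so monotonicity \ref{def:filter2} sandwiches $\ph_{\al_n}(\ka_\la,z_\la-\varepsilon)\le\ph_{\al_n}(\ka_\la,z^n_\la)\le\ph_{\al_n}(\ka_\la,z_\la+\varepsilon)$, while \ref{def:filter4} drives the outer terms to $z_\la\pm\varepsilon$; letting $\varepsilon\to0$ gives the claim. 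Consequently any weak limit of $(x^n)$ must equal $\Mo^+_{\kao}z$, and it suffices to secure boundedness.

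\emph{Convergence, the hard part.} The uniform $\ell^2$-bound on $(x^n)$ is the main obstacle, because the stability estimate degenerates as the constant $d^2/\al_n$ blows up. The natural tool is the variational characterisation of Lemma~\ref{lem_main}: $x^n$ minimises $\tfrac12\norm{\Mo_\kao x-z^n}^2+\Rc_{\al_n}(x)$, so comparing with $x^+:=\Mo^+_{\kao}z$, for which $\Mo_\kao x^+=z$ and hence the data term is $\le\tfrac12\delta_n^2$, gives
\[ \tfrac12\norm{\Mo_\kao x^n-z^n}^2+\Rc_{\al_n}(x^n)\le\tfrac12\delta_n^2+\Rc_{\al_n}(x^+). \]
Rewriting through $E_{\al,\la}(t):=t^2/2+\qone_{\al,\la}(t)\ge0$, the left side becomes $\sum_\la E_{\al_n,\la}(\ph_{\al_n}(\ka_\la,z^n_\la))-\langle\Phi_{\al_n,\kao}z^n,z^n\rangle+\tfrac12\snorm{z^n}^2$, where \ref{ass:A3} controls the cross term by $K\snorm{z^n}^2$; the range hypothesis $z\in\ran(\Phi_{\al_n,\kao})$ supplies $w^n:=\Phi_{\al_n,\kao}^{-1}(z)\in\ell^2$ with $\ph_{\al_n}(\ka_\la,\cdot)^{-1}(z_\la)=w^n_\la$, whence $\Rc_{\al_n}(x^+)=\sum_\la\qone_{\al_n,\la}(z_\la)\le\langle z,w^n\rangle$. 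Assembling these yields a uniform bound on $\sum_\la E_{\al_n,\la}(\ph_{\al_n}(\ka_\la,z^n_\la))$. The delicate step I expect to be genuinely hard is converting this energy bound into a bound on $\sum_\la\ka_\la^{-2}\abs{\ph_{\al_n}(\ka_\la,z^n_\la)}^2$: the growth estimates of Remark~\ref{rem:Ass_A} ($E_{\al,\la}(t)\ge (c\al/\ka_\la)\sabs{t}$ for $\sabs{t}\ge 2cd\sqrt\al$) control only the weighting $\ka_\la^{-1}$, not $\ka_\la^{-2}$, so the strongly damped coordinates must be handled separately. There I would split off $\{\sabs{z^n_\la}\le c\al_n/\ka_\la\}$, apply \ref{ass:A1}, and use the parameter choice $\al_n\gtrsim\delta_n^2$ to absorb the noise contribution of order $d^2\delta_n^2/\al_n$ while the finitely many large coordinates are treated as in the existence step; taming the residual $1/\al_n$ factor on the signal part uniformly in $n$ (using the $\ell^2$-membership of $w^n$ from the range condition, and possibly the lower bound \ref{ass:A2}) is the crux of the whole proposition. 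Once $(x^n)$ is bounded, boundedness and the coordinatewise limit give $\Mo^+_{\kao}\Phi_{\al_n,\kao}(z^n)\rightharpoonup\Mo^+_{\kao}z$.
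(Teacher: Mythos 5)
Your existence and stability arguments coincide with the paper's (same two-region split via \ref{ass:A1} and finiteness of the large-coefficient set, then boundedness plus coordinatewise convergence), and your sandwich argument for the coordinatewise limit in the convergence part, $\ph_{\al_n}(\ka_\la, z_\la - \varepsilon) \le \ph_{\al_n}(\ka_\la, z^n_\la) \le \ph_{\al_n}(\ka_\la, z_\la + \varepsilon)$ using \ref{def:filter2} and \ref{def:filter4}, is correct and in fact more elementary than the paper's identification, which goes through Lemma \ref{lem_main}, the functions $\qone_{\al,\la}$, and dominated convergence. The proposition therefore stands or falls with the uniform $\ell^2$-bound on $x^n = \Mo^+_{\kao} \circ \Phi_{\al_n,\kao}(z^n)$, and there your proposal has a genuine gap, which you flag yourself as ``the crux of the whole proposition'' but do not close.

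Two concrete problems with your sketched route. First, the energy comparison needs a uniform bound on $\Rc_{\al_n}(\Mo^+_{\kao}z)$; you bound it by $\norm{z}\,\norm{w^n}$ with $w^n = \Phi_{\al_n,\kao}^{-1}(z)$, but the range condition only gives $w^n \in \ell^2(\La)$ for each $n$, not $\sup_n \norm{w^n} < \infty$, and \ref{ass:A2} cannot supply uniformity because $g_\ka$ and $\tilde\al$ there depend on $\ka$. Second, and more fundamentally, your fallback ``the finitely many large coordinates are treated as in the existence step'' fails: in the existence/stability step $\al$ is fixed, so the exceptional set is contained in a fixed finite set, whereas here $\La_n = \{\la : \abs{z^n_\la} \ge c\al_n/\ka_\la\}$ grows without bound as $\al_n \to 0$, so finiteness alone yields no $n$-uniform constant (and, as you correctly note, the energy lower bounds of Remark \ref{rem:Ass_A} carry the wrong weight $\ka_\la^{-1}$). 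The paper closes exactly this hole without any energy comparison: it splits $\norm{x^n}^2$ three ways --- over $\La_n$, over $\la \notin \La_n$ with $\ka_\la > \sqrt{\al_n}$, and over $\la \notin \La_n$ with $\ka_\la \le \sqrt{\al_n}$ --- applying \ref{ass:A3} on the first two sets and \ref{ass:A1} on the third. The signal contributions are then summable because $z \in \ran(\Mo_{\kao})$ gives $\Mo^+_{\kao}z \in \ell^2(\La)$ (this, not the $\ell^2$-membership of $w^n$, is the right tool); the noise contribution off $\La_n$ is controlled by $\delta_n/\sqrt{\al_n} \lesssim 1$; and the noise contribution on $\La_n$ is controlled by the key observation missing from your proposal: $\bigl(\delta_n/\inf_{\la \in \La_n}\ka_\la\bigr)_n$ is bounded, since for $\la \in \La_n$ one has $\norm{\Mo^+_{\kao}z}_\infty + \delta_n/\ka_\la \ge \abs{z^n_\la}/\ka_\la \ge c\al_n/\ka_\la^2 \gtrsim \delta_n^2/\ka_\la^2$, and unboundedness of $\delta_n/\ka_\la$ would then contradict $\norm{\Mo^+_{\kao}z}_\infty < \infty$. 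That single estimate is what tames the strongly damped coordinates, and without it (or a substitute) your proof of the convergence part does not go through.
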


\begin{proof}
The proof can be found in the Appendix. 
\end{proof}

By the continuity of the frame operators and the above convergence results we can formulate that under assumption \ref{Ass} the non-linear filtered DFD is a convergent regularization method. 

\begin{theorem}[Convergence of $\Fc_\al$]
Let $(\ph_{\al})_{\al > 0}$ be a weakly convex regularizing filter satisfying Assumption \ref{Ass}, $(\delta_k)_k$, $ (\al_k)_k$ two null sequences such that  $\al_k \gtrsim \delta_k^2$ and let $y^k, y \in \Y$ such that $\snorm{y^k -y} \leq \delta_k$. 
\begin{itemize}[leftmargin=2em]
\item  \textit{Existence:} $\dom(\Fc_\al )= \ell^2(\La)$.
\item  \textit{Stability:}  Let $\al > 0$ be fixed. Then, $\Fc_{\al} (y^k) \rightharpoonup \Fc_\al$.
\item \textit{Convergence:} Suppose $y \in \ran(\Ao)$ such that $\To_{\vo}^*(y) \in \ran(\Phi_{\al,\kao})$ for all $\al \in (0,\tilde\al)$ for some $\tilde\al > 0$. Then, $\Fc_{\al_k}(y^k) \rightharpoonup \Ao^+ y$.
\end{itemize}
\end{theorem}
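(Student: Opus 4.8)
The plan is to deduce all three claims directly from Proposition~\ref{prop:conv} for the diagonal operator $\Mo^+_{\kao} \circ \Phi_{\al,\kao}$, exploiting the factorization
\[
\Fc_\al = \To_{\bar\uo} \circ \Mo^+_\kao \circ \Phi_{\al,\kao} \circ \To_\vo^*,
\]
in which the outer maps $\To_\vo^* \colon \Y \to \ell^2(\La)$ and $\To_{\bar\uo} \colon \ell^2(\La) \to \X$ are bounded and linear. Since bounded linear operators are both strongly continuous and weak--weak continuous, they transport the convergence statements of Proposition~\ref{prop:conv} through the composition with no loss; essentially the whole argument is sandwiching that proposition between the two frame operators.

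For existence I would note that $\To_\vo^* y \in \ell^2(\La)$ for every $y \in \Y$ by boundedness of $\To_\vo^*$, so by the existence claim of Proposition~\ref{prop:conv} the value $\Mo^+_\kao \Phi_{\al,\kao}(\To_\vo^* y)$ is defined in $\ell^2(\La)$, and applying $\To_{\bar\uo}$ yields an element of $\X$; hence $\dom(\Fc_\al) = \Y$. For stability with $\al > 0$ fixed and $\norm{y^k - y} \le \delta_k \to 0$, linearity and boundedness of $\To_\vo^*$ give $\norm{\To_\vo^* y^k - \To_\vo^* y}_2 \le \norm{\To_\vo^*}\,\delta_k \to 0$. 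The stability part of Proposition~\ref{prop:conv} then yields $\Mo^+_\kao \Phi_{\al,\kao}(\To_\vo^* y^k) \rightharpoonup \Mo^+_\kao \Phi_{\al,\kao}(\To_\vo^* y)$, and the weak--weak continuity of $\To_{\bar\uo}$ propagates this weak convergence to $\Fc_\al(y^k) \rightharpoonup \Fc_\al(y)$.

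The convergence claim requires one genuinely new translation step. Setting $z \coloneqq \To_\vo^* y$ and $z^n \coloneqq \To_\vo^* y^k$, the hypotheses give $z \in \ran(\Phi_{\al,\kao})$ for $\al \in (0,\tilde\al)$ directly, and $\norm{z - z^n}_2 \lesssim \delta_k$ with $\al_k \gtrsim \delta_k^2 \gtrsim \norm{z - z^n}_2^2$, the fixed constant $\norm{\To_\vo^*}^2$ being absorbed into $\gtrsim$. What remains is to verify the missing membership $z \in \ran(\Mo_\kao)$. Here I would use $y \in \ran(\Ao)$, writing $y = \Ao x$, together with the quasi-singular relation \ref{D3}: since $\Ao^* v_\la = \ka_\la u_\la$, one computes $\inner{y}{v_\la} = \inner{x}{\Ao^* v_\la} = \ka_\la \inner{x}{u_\la}$, so $\To_\vo^* y = \Mo_\kao \To_\uo^* x \in \ran(\Mo_\kao)$. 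With $z \in \ran(\Mo_\kao) \cap \ran(\Phi_{\al,\kao})$ established, the convergence part of Proposition~\ref{prop:conv} gives $\Mo^+_\kao \Phi_{\al_k,\kao}(z^n) \rightharpoonup \Mo^+_\kao z$, and applying the weak--weak continuous $\To_{\bar\uo}$ together with the DFD inversion formula \eqref{eq:dfd-inv}, which reads $\Ao^+ y = \To_{\bar\uo} \Mo^+_\kao \To_\vo^* y = \To_{\bar\uo} \Mo^+_\kao z$, delivers $\Fc_{\al_k}(y^k) \rightharpoonup \Ao^+ y$.

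The main obstacle is not any single estimate — the heavy lifting is already contained in Proposition~\ref{prop:conv} — but rather the bookkeeping that ensures the frame operators correctly intertwine the range and noise-level conditions. Concretely, the one new computation is checking $\To_\vo^* y \in \ran(\Mo_\kao)$ via the quasi-singular relations, and confirming that the coupling $\al_k \gtrsim \delta_k^2$ survives pushing the data perturbation through the bounded operator $\To_\vo^*$, which it does since squaring a fixed operator norm only rescales the implicit constant.
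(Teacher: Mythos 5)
Your proposal is correct and follows essentially the same route as the paper: both reduce everything to Proposition~\ref{prop:conv} via the factorization $\Fc_\al = \To_{\bar\uo} \circ \Mo^+_\kao \circ \Phi_{\al,\kao} \circ \To_\vo^*$, verify the range condition $\To_\vo^* y \in \ran(\Mo_\kao)$ through the quasi-singular relation $\To_\vo^*\Ao = \Mo_\kao\To_\uo^*$, and transport the weak convergence through $\To_{\bar\uo}$ by weak--weak continuity (which the paper checks by hand via the adjoint, and you cite as a standard fact). Your identification of the limit through the inversion formula \eqref{eq:dfd-inv} is equivalent to the paper's choice of $x^+ \in \ker(\Ao)^\perp$ with $\Ao x^+ = y$, so the two proofs differ only in presentation.
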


\begin{proof}
Existence and Stability follow directly from Proposition \ref{prop:conv} and the continuity of $\To_{\bar \uo}$ and $\To_{\vo^*}$. 
For the convergence take $x^+ \in \ker(\Ao)^\perp$ such that $\Ao x^+ = y$, then 
\[ \norm{\To_\vo^*(y_k)-\Mo_\kao \To_\uo^* x^+} = \norm{\To_\vo^*(y_k)-\To_\vo^* \Ao x^+} = \norm{\To_\vo^*(y_k)-\To_\vo^*(y)}\leq \norm{\To_\vo}\delta_k. \]
Define $z^k = \Mo_\kao^+ \circ \Phi_{\al_k, \kao} \circ \To_\vo^* (y^k)$. By Proposition \ref{prop:conv}, we have $z^k \rightharpoonup \Mo_\kao^+ \Mo_\kao \To_\uo^*x^+ = \To_\uo^*x^+$.
For any $e \in \X$, we have
\[ \inner{e}{\To_{\bar\uo} z^k - x^+ } = \inner{e}{\To_{\bar\uo} z^k - \To_{\bar\uo}\To_\uo^*x^+ } = \inner{\To^*_{\bar\uo} e}{z^k - \To_\uo^*x^+ } \rightarrow 0,\]
so $x^k = \To_{\bar\uo} z^k \rightharpoonup x^+$.
\end{proof}

\subsection{Convergence Rate}

In the following, we introduce the absolute symmetric Bregman distance and investigate the rate of convergence of the non-linear DFD with respect to that measure. While these estimates are rather abstract, in some cases, this leads directly to strong convergence and rates in the norm topology.

\subsubsection{Absolute Symmetric Bregman Distance}

The next goal is to analyze the rate of convergence. When dealing with weak convergence, a convenient strategy for obtaining qualitative estimates is to use Bregman distances. These distances are widely employed to describe the quality of convergence in convex optimization algorithms or regularization methods for inverse problems, as seen in, for example, \cite{Burger2004, Burger2016, Abascal2018, Ebner2023}.

Originally the Bregman distance describes the distance between two points $x$ and $y$ with respect to a proper, convex and lower semi-continuous function $\Qc$ defined as 
\[D_\Qc^{p}(x,y) = \Qc(x)-\Qc(y) - \inner{p}{x-y},\]
where $p \in \partial \Qc(y)$. Note that if $\Qc$ is additionally Gateaux differentiable, the only sub-gradient is just $p=\nabla \Qc(y)$. 
However, this mapping is not a distance; it lacks symmetry, and in the case of Gateaux differentiable non-convex functionals, it is well defined but may be negative.
Therefore, we rely on a generalized version of the Bregman distance, known as the absolute symmetric Bregman distance, defined by
\[  D_\Qc (x,y) \coloneqq \abs{D_\Qc^{\nabla \Qc(y)}(x,y) + D_\Qc^{\nabla \Qc(x)}(y,x)} = \abs{\inner{\nabla \Qc(x)-\nabla \Qc(y)}{x-y}} \]
for a differantiable but non-convex functional $\Qc \colon \dom(\Qc) \subset \X \rightarrow \R $. 
While this symmetrification of the Bregman distance is often used (first in \cite{burger2007error}), the positivization was introduced recently in \cite{OBMANN2024, obmann2023convergence} to address the lack of positivity when using non-convex functionals.

It's important to note that there are other generalizations of the Bregman distance for non-convex functionals; for example, in \cite{Grasmair2010}, the author extends the Bregman distance to locally convex functionals.

\subsubsection{Neighbouring Stationary Regularization}

In classical variational regularization the regularizing term depends linear on the regularization parameter, as a stationary $\kao$-regularizer $\al \Qc$, and rates are analysed with respect to the regularizer $\Qc$. In our case the regularization parameter is given implicitly and the convergence behaviour of $\Rc_\al$ as $\al \downarrow 0$ is uncontrolled so far. 
However, if $\Rc_\al$ lies in certain bounded areas which can be described by stationary regularizing terms then we obtain a similar behaviour. 

The idea is to find a family of functions $(q_\la)_\la$ such that $s_{\al,\la}$ is close to $\al q_\la$ for every $\la \in \La$. By close we mean that for every $\la \in \La$ and every $x \in \R$ holds 
\[\abs{s_{\al, \la} (x) - \al q_\la(x)} \leq \frac{L\al}{\ka_\la} x^2\]
for some fixed constant $L > 0$.
Given such a family $(q_\la)_\la$ one may define the functional $\Qc(x) \colon \ell^2(\La) \rightarrow \R \cup \{\infty\} \colon (x_\la)_\la \mapsto \sum_{\la \in \La} q_\la(\ka_\la x_\la)$. Then, 
\[ \abs{\Rc_\al (x)- \al \Qc(x)} \leq \sum_{\la \in \La} \abs{s_{\al,\la}(\ka_\la x_\la) - \al q_\la(\ka_\la x_\la)} \leq L \al \max_{\la \in \La} \ka_\la \norm{x}^2 \]
for all $x = (x_\la)_\la \in \ell^2(\La)$.

\begin{definition}[Neighbouring stationary $\kao$-regularizer]\label{def:Neigh}
For some $L > 0$ and some $\tilde \al > 0$ small enough consider a family $(q_\la)_{\la \in \La}$ of functions $q_\la \colon \R \rightarrow \R$ with the following properties:
\begin{enumerate}[label=(Q\arabic*), leftmargin =2.5em]
\item \label{Ass:Q1} $q_\la$ is continuously differentiable, 
\item \label{Ass:Q2} $q'_\la(0)=0$, 
\item \label{Ass:Q3} for $x < y \colon (q'_\la(y) - q'_\la(x))/(y-x) \geq 2L/\ka_\la - 1/\tilde{\al}$. 
\end{enumerate}
Define $\Qc \colon \dom(\Qc) \rightarrow \ell^2 \colon x \mapsto \sum_{\la \in \La} q_\la(\ka_\la x_\la)$ where $\dom(\Qc) \coloneqq \{x \in \ell^2(\La) \mid ((\ka_\la x_\la)^2/(2\tilde \al) + q_{\la}(\ka_\la x))_\la \in \ell^1(\La)  \}$. 
Let $(\ph_\al)_{\al >0}$ be a regularizing filter. 
We call $\al\Qc$ a neighbouring stationary $\kao$-regularizer to $\Mo^+_{\kao} \circ \Phi_{\al,\kao}$ if 
\begin{align}\label{phi_inte}
\abs{\ph_{\al}(\ka_\la,x)} \in \left[ \abs{\prox_{\al \left( q_\la + L/\ka_\la(\cdot)^2\right)}(x)}, \abs{\prox_{\al \left( q_\la - L/\ka_\la(\cdot)^2\right)}(x) }\right]
\end{align}  
holds for all $x\in \R$, $\la \in \La$ and $\al \in (0,\tilde \al)$.
\end{definition}

Assumption \ref{Ass:Q3} implies that $\id + \al(q'_\la \pm 2L/\ka_\la (\cdot))$ is bijective and increasing for all $\al \in (0,\tilde \al)$. Then, one can easily verify that  $\al \left( q_\la \pm L/\ka_\la(\cdot)^2\right)$ is weakly strictly convex and weakly coercive, hence its proximity operator is well-defined. 
Note that, by Assumption \ref{Ass:Q3}, $q'_\la$ has to be  strictly increasing and hence $q_\la$ is convex if $\ka_\la \leq 2 L \tilde \al $. Since $\sup_{\la \in \La} \ka_\la < \infty$ this holds for all $\la$ except for finitely many.

\begin{proposition}[Differentiability of $\Qc$]
Define 
\[\dom(\Qc') \coloneqq \{x \in \ell^2(\La) \mid (\ka_\la q_{\la}'(\ka_\la x_\la))_\la \in \ell^2(\La) \}.\]
We have that $\Qc$ is Gateaux differentiable on $\dom(\Qc)^\circ \, \cap \, \dom(\Qc')$ with gradient $\nabla \Qc(x) = (\ka_\la q_{\la}'(\ka_\la x_\la))_\la$.
\end{proposition}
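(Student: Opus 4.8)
The statement is the exact analogue of Proposition \ref{prop:diff_R}, so the plan is to reuse that argument, exploiting that $\Qc(x) = \sum_{\la \in \La} q_\la(\ka_\la x_\la)$ is a separable sum of one-dimensional, continuously differentiable pieces. Fix $x \in \dom(\Qc)^\circ \cap \dom(\Qc')$ and a direction $h \in \ell^2(\La)$; since $x$ is interior to $\dom(\Qc)$, choose $\epsilon_0 > 0$ with $x \pm \epsilon_0 h \in \dom(\Qc)$. Writing the difference quotient termwise,
\[ \frac{\Qc(x+th) - \Qc(x)}{t} = \sum_{\la \in \La} \frac{q_\la(\ka_\la x_\la + t \ka_\la h_\la) - q_\la(\ka_\la x_\la)}{t}, \]
the whole problem reduces to justifying the exchange of $\lim_{t \to 0}$ with the infinite sum, since by \ref{Ass:Q1} and the mean value theorem each summand converges pointwise to $\ka_\la q_\la'(\ka_\la x_\la) h_\la$.

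The device that makes domination possible is the convexity hidden in \ref{Ass:Q3}. Setting $c_\la \coloneqq 2L/\ka_\la - 1/\tilde\al$, condition \ref{Ass:Q3} says precisely that $\hat q_\la \coloneqq q_\la - c_\la (\cdot)^2/2$ has nondecreasing derivative, hence is convex, with $\hat q_\la'(0)=0$ by \ref{Ass:Q2}. I would split each summand into the quadratic part and the convex part. The quadratic part has the explicit difference quotient $c_\la \ka_\la^2 (x_\la + \tfrac{t}{2} h_\la) h_\la$; because $\sup_{\la} \ka_\la < \infty$ gives a uniform bound on $\abs{c_\la}\ka_\la^2$, these terms are summable against $h \in \ell^2(\La)$ uniformly for $\abs{t} \le \epsilon_0$ and converge to $c_\la \ka_\la^2 x_\la h_\la$.

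For the convex part I would invoke the monotonicity of difference quotients of convex functions: for $0 < t \le \epsilon_0$ the mean value point lies between $\ka_\la x_\la$ and $\ka_\la (x_\la + \epsilon_0 h_\la)$, so the convex-part quotient is squeezed between its limit $\ka_\la \hat q_\la'(\ka_\la x_\la) h_\la$ and the fixed endpoint quotient $[\hat q_\la(\ka_\la (x+\epsilon_0 h)_\la) - \hat q_\la(\ka_\la x_\la)]/\epsilon_0$ (and symmetrically through $x - \epsilon_0 h$ for $t < 0$). The key cancellation is that, after substituting $q_\la = \hat q_\la + c_\la(\cdot)^2/2$, membership $x, x \pm \epsilon_0 h \in \dom(\Qc)$ is equivalent to summability of $(\hat q_\la(\ka_\la (\cdot)_\la))_\la$ in $\ell^1(\La)$; hence the endpoint quotients form a summable sequence. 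The other envelope $\ka_\la \hat q_\la'(\ka_\la x_\la) h_\la$ is summable by Cauchy--Schwarz, using $x \in \dom(\Qc')$ and $h \in \ell^2(\La)$. Thus the convex-part quotients are dominated by a fixed $\ell^1(\La)$ sequence and dominated convergence applies.

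Collecting the quadratic and convex contributions, the directional derivative equals $\sum_{\la \in \La} \ka_\la q_\la'(\ka_\la x_\la) h_\la = \inner{\nabla \Qc(x)}{h}$, which converges absolutely since $(\ka_\la q_\la'(\ka_\la x_\la))_\la \in \ell^2(\La)$ (from $x \in \dom(\Qc')$) and $h \in \ell^2(\La)$; linearity and boundedness of $h \mapsto \inner{\nabla \Qc(x)}{h}$ then give Gateaux differentiability with the claimed gradient. I expect the main obstacle to be exactly the domination of the convex part: one must check that the mean value points remain controlled by the in-domain endpoints $x \pm \epsilon_0 h$ rather than by $q_\la'$ at shifted arguments that need not lie in $\dom(\Qc')$, and that the quadratic shift $c_\la(\cdot)^2/2$ is reabsorbed so that the bound is phrased through the $\dom(\Qc)$-summable values of $\hat q_\la$.
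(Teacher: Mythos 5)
Your proof is correct, and it rests on the same central idea as the paper's: use \ref{Ass:Q3} to split each $q_\la$ into a convex part plus an explicit quadratic, then differentiate the separable sum termwise. The implementations differ in two minor but genuine ways. The paper \emph{adds} the quadratic, working with $r_\la(x) = (\ka_\la x)^2/(2\tilde\al) + q_\la(\ka_\la x)$, checks via \ref{Ass:Q3} that $r_\la'$ is increasing, and then simply reruns the argument of Proposition \ref{prop:diff_R}: invoke the appendix Lemma \ref{lem:diff_sep} (differentiability of convex separable sums, proved by monotone convergence via Lemma \ref{lem:diff_ineq}) and subtract the differentiable quadratic $\tfrac{1}{2\tilde\al}\norm{\Mo_{\kao}\,\cdot\,}^2$. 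You instead \emph{subtract} the sharper quadratic $c_\la(\cdot)^2/2$ with $c_\la = 2L/\ka_\la - 1/\tilde\al$ and re-derive the limit--sum interchange inline by dominated convergence, building the $\ell^1$ envelope from the endpoint quotients (summable because $x \pm \epsilon_0 h \in \dom(\Qc)$, after reabsorbing the quadratic shift using $\sup_\la \ka_\la < \infty$) together with the derivative terms $\ka_\la \hat q_\la'(\ka_\la x_\la) h_\la$ (summable by Cauchy--Schwarz from $x \in \dom(\Qc')$ and $h \in \ell^2(\La)$). Both routes consume exactly the same three ingredients --- convexity from \ref{Ass:Q3}, the bound $\sup_\la \ka_\la < \infty$ to tame the quadratic, and the two domain hypotheses --- so neither is more general; the paper's version buys brevity by reusing Lemma \ref{lem:diff_sep}, while yours buys a self-contained argument whose domination step makes explicit where $\dom(\Qc)^\circ$ and $\dom(\Qc')$ each enter. (One cosmetic point: the squeeze you describe is the standard monotonicity of difference quotients of a convex function; no mean value theorem, and hence no control of intermediate points, is actually needed.)
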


\begin{proof}
Fix $L, \tilde \al > 0$.
Consider $r_\la(x) = (\ka_\la x)^2/(2\tilde \al) + q_{\la}(\ka_\la x)$. Then the proof is similar to the proof of Proposition \ref{prop:diff_R}. We only have to show that $r_\la'$ is increasing. Let $x < y$ then we have
\begin{align*}
    r_\la'(y)-r_\la'(x) &= y \ka_\la^2/\tilde \al  + \ka_\la q'_{\la}(\ka_\la y) - x \ka_\la^2/\tilde \al - \ka_\la q'_{\la}(\ka_\la x) \\
    &=  (y-x) \ka_\la^2/\tilde \al  + \ka_\la (q'_{\la}(\ka_\la y) - q'_{\la}(\ka_\la x))\\
    & \geq (y-x) (\ka_\la^2/\tilde \al + \ka_\la^2 (2L/\ka_\la -1/\tilde \al))  =  (y-x) 2L\ka_\la > 0. 
\end{align*}
\end{proof}

We show rates of convergence in the symmetric Bregman-distance with respect to $\Qc$.

\begin{theorem}[Rates in Symmetric Bregman-distance] \label{rate}
Let $(\ph_{\al})_{\al > 0}$ be a weakly convex regularizing filter which satisfies Assumption \ref{Ass} and assume that $\Mo^+_{\kao} \circ \Phi_{\al,\kao}$ has a neighbouring stationary $\kao$-regularizer $\al\Qc$.
Suppose $z\in \ran(\Mo_{\kao}) \cap \ran(\Phi_{\al,\kao})$ for all $\al \in (0,\tilde \al)$ and $z^n \in \ell^2(\La)$ with  $\norm{z-z^n}_2 \leq \delta_n \rightarrow 0$.
Let $\al_n \rightarrow 0$ and define $x^n \coloneqq \Mo^+_{\kao} \circ \Phi_{\al_n,\kao} (z^n)$.
Further, suppose that $x^n , \Mo_{\kao}^+ z \in \dom(\Qc)^\circ \, \cap \, \dom(\Qc')$ for all $n \in \N$ and $(q_\la'(z_\la))_\la \in \ell^2$. Then, it holds
\[ D_\Qc(x^n,\Mo_{\kao}^+ z) \leq \frac{1}{2} \frac{\delta_n^2}{\al_n} + C \delta_n + C^2 \al_n \]
for a constant $C > 0$. 
\end{theorem}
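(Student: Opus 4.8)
The plan is to reduce the statement to a component-wise estimate in the diagonalised data space and to run the classical source-condition argument for symmetric Bregman rates, adding a positivisation for the non-convexity of $\Qc$ and a correction for the non-stationarity $\Rc_{\al_n}\neq\al_n\Qc$. Put $\hat x\coloneqq\Mo^+_\kao z$, $w\coloneqq\Mo_\kao x^n$, $r\coloneqq w-z^n$, $\eta\coloneqq z^n-z$ (so $\norm\eta\le\delta_n$) and $\omega\coloneqq(q_\la'(z_\la))_\la$. Since $z\in\ran(\Mo_\kao)$ we have $\ka_\la\hat x_\la=z_\la$, so the gradient formula $\nabla\Qc(x)=(\ka_\la q_\la'(\ka_\la x_\la))_\la$ gives the \emph{source identity} $\nabla\Qc(\hat x)=\Mo_\kao\omega$ with $\omega\in\ell^2$ by hypothesis; this plays the role of the classical range condition. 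By Lemma~\ref{lem_main} the point $x^n$ minimises $\tfrac12\norm{\Mo_\kao x-z^n}^2+\Rc_{\al_n}(x)$ component-wise, and stationarity of each strictly convex one-dimensional objective yields the \emph{optimality identity} $\nabla\Rc_{\al_n}(x^n)=\Mo_\kao(z^n-w)$. With these two identities the goal becomes to bound $D_\Qc(x^n,\hat x)=\abs{T}$, where $T\coloneqq\inner{\nabla\Qc(x^n)-\nabla\Qc(\hat x)}{x^n-\hat x}$.

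Next I would read off the derivative estimate that drives everything. Writing $\prox_g=(\id+g')^{-1}$ and inverting the increasing maps in the sandwich \eqref{phi_inte} with $g_\pm=\al\bigl(q_\la\pm\tfrac{L}{\ka_\la}(\cdot)^2\bigr)$ gives, for every $y$,
\[\abs{\qone_{\al,\la}'(y)-\al q_\la'(y)}\le\frac{2L\al}{\ka_\la}\abs{y}.\]
Hence $e^n\coloneqq\nabla\Rc_{\al_n}(x^n)-\al_n\nabla\Qc(x^n)$ obeys $\abs{e^n_\la}\le 2L\al_n\abs{w_\la}$. The key point is that the apparently harmful factor $1/\ka_\la$ cancels inside the Bregman pairing, because $w_\la/\ka_\la=x^n_\la$:
\[\abs{\inner{e^n}{x^n-\hat x}}=\Bigl|\sum_{\la}\ka_\la\bigl(\qone_{\al_n,\la}'(w_\la)-\al_n q_\la'(w_\la)\bigr)\tfrac{w_\la-z_\la}{\ka_\la}\Bigr|\le 2L\al_n\sum_\la\abs{x^n_\la}\abs{w_\la-z_\la}\le 2L\al_n\norm{x^n}\norm{w-z},\]
the last step being Cauchy--Schwarz.

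I would then assemble $\al_n T$. Inserting the optimality and source identities,
\[\al_n T=\inner{\nabla\Rc_{\al_n}(x^n)}{x^n-\hat x}-\inner{e^n}{x^n-\hat x}-\al_n\inner{\Mo_\kao\omega}{x^n-\hat x},\]
and using $\Mo_\kao(x^n-\hat x)=w-z=r+\eta$ with $\Mo_\kao=\Mo_\kao^\ast$ gives $\inner{\nabla\Rc_{\al_n}(x^n)}{x^n-\hat x}=\inner{z^n-w}{w-z}=-\norm r^2-\inner r\eta$ and $\al_n\inner{\Mo_\kao\omega}{x^n-\hat x}=\al_n\inner\omega{r+\eta}$. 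Estimating $\abs{\inner r\eta}\le\norm r\delta_n$, $\abs{\inner\omega{r+\eta}}\le\norm\omega(\norm r+\delta_n)$ and the correction by the previous display yields
\[\al_n T\le-\norm r^2+\norm r\delta_n+\bigl(2L\norm{x^n}+\norm\omega\bigr)\al_n(\norm r+\delta_n).\]
Dividing by $\al_n$ and using $\tfrac{\norm r\delta_n}{\al_n}\le\tfrac{\norm r^2}{2\al_n}+\tfrac{\delta_n^2}{2\al_n}$ and $C\norm r\le\tfrac{\norm r^2}{2\al_n}+\tfrac{C^2}{2}\al_n$, the negative term $-\norm r^2/\al_n$ absorbs both $\norm r^2$-contributions exactly and leaves
\[T\le\frac{\delta_n^2}{2\al_n}+C\delta_n+C^2\al_n,\qquad C\coloneqq 2LM+\norm\omega,\]
where $M\coloneqq\sup_n\norm{x^n}<\infty$ (finiteness of which follows, in the relevant regime $\al_n\gtrsim\delta_n^2$, from the weak convergence of Proposition~\ref{prop:conv}; this is where Assumption~\ref{Ass} enters).

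The hard part is that this clean cancellation only bounds $T$ from above, while $D_\Qc=\abs{T}$ also needs a lower bound, and $T$ can be negative since the finitely many $q_\la$ with $\ka_\la>2L\tilde\al$ fail to be convex. Here I would use Assumption~\ref{Ass:Q3} directly: it gives $[q_\la'(w_\la)-q_\la'(z_\la)](w_\la-z_\la)\ge(\tfrac{2L}{\ka_\la}-\tfrac1{\tilde\al})(w_\la-z_\la)^2\ge-\tfrac1{\tilde\al}(w_\la-z_\la)^2$, hence $T\ge-\tfrac1{\tilde\al}\norm{w-z}^2$ — a genuinely quadratic lower bound, so no Lipschitz control of $q_\la'$ is needed. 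Feeding this back into the expansion of $T$ turns it into a quadratic inequality for $\norm r$ that self-improves to a residual estimate $\norm r=O(\delta_n+\al_n)$, whence $\norm{w-z}^2=O(\delta_n^2+\al_n^2)$ and therefore $-T\le\tfrac1{\tilde\al}\norm{w-z}^2$ is of lower order than the right-hand side. Combining the two directions bounds $\abs{T}$, and hence $D_\Qc(x^n,\hat x)$, as claimed. The two delicate points to get right are thus the $1/\ka_\la$-cancellation in the non-stationarity correction and the bookkeeping of the residual and boundedness estimates controlling the positivised (negative) part.
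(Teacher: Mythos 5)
Your proposal is correct, and its core coincides with the paper's own argument: you use the same three ingredients --- the optimality identity $\nabla\Rc_{\al_n}(x^n)=-\Mo_{\kao}(\Mo_{\kao}x^n-z^n)$ from Lemma~\ref{lem_main}, the source identity $\nabla\Qc(\Mo_{\kao}^+z)=\Mo_{\kao}\omega$ with $\omega=(q_\la'(z_\la))_\la\in\ell^2$ (the paper's $\eta$), and the non-stationarity correction extracted from \eqref{phi_inte}, namely $\abs{e^n_\la}\le 2L\al_n\abs{x^n_\la}$ up to the factor $\ka_\la$ (the paper's $\eta^n$ with $\norm{\eta^n}\le 2L\norm{x^n}$) --- assembled via Cauchy--Schwarz and absorption of $\norm{\Mo_{\kao}x^n-z^n}^2$ by the arithmetic-geometric mean inequality. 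Where you genuinely diverge is the treatment of the absolute value, and there your proposal is actually more rigorous than the paper. The paper splits $\al_n\abs{T}$ into three absolute values (writing an equality where only a triangle inequality holds) and then bounds the data-fidelity term's absolute value by $\tfrac12\delta_n^2-\tfrac12\norm{\Mo_{\kao}x^n-z^n}^2$; this estimate is valid only for the signed quantity (it is the convexity inequality for $\tfrac12\norm{\Mo_{\kao}\cdot-z^n}^2$), not for its modulus, since the right-hand side can be negative. Read literally, the paper therefore only establishes the upper bound on $T$, which is exactly the issue you identify: since $\Qc$ need not be convex, $D_\Qc=\abs{T}$ also requires a lower bound on $T$. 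Your fix --- the bound $T\ge-\norm{\Mo_{\kao}x^n-z}^2/\tilde\al$ from \ref{Ass:Q3}, fed back into the expansion to obtain the self-improving residual estimate $\norm{\Mo_{\kao}x^n-z^n}=O(\delta_n+\al_n)$, so that the negative part of $T$ is of lower order --- supplies precisely the missing half (modulo enlarging $C$ and discarding finitely many indices, which is routine). One shared caveat: both you and the paper obtain boundedness of $\norm{x^n}$ (hence of $C$) from Proposition~\ref{prop:conv}, whose convergence part implicitly requires $\al_n\gtrsim\delta_n^2$ although the theorem only states $\al_n\to 0$; you flag this explicitly, the paper does not.
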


\begin{proof}
First note that by Lemma \ref{lem_main} we have $x^n \in \dom(\Rc_{\al_n})^\circ \cap \dom(\Rc_{\al_n}')$ and hence $\nabla \Rc_\al(x^n)$ exists and is equal to $(\ka_\la s_{\al,\la}'(\ka_\la x_\la^n))_{\la \in \La}$.
By Lemma \ref{lem_main} we have that
\[ \nabla \Rc_{\al_n}(x^n) = - \Mo_{\kao} (\Mo_{\kao} x^n - z^n). \]
We consider 
\begin{align*}
\al_n \abs{\inner{\nabla \Qc(x^n) - \nabla \Qc(\Mo_{\kao}^+ z)}{x^n -\Mo_{\kao}^+ z} } & = \al_n \abs{\inner{\nabla \Qc(x^n) - \frac{\nabla \Rc_{\al_n}(x^n)}{\al_n}}{x^n -\Mo_{\kao}^+ z}} \\
& + \al_n \abs{\inner{\frac{\nabla \Rc_{\al_n}(x^n)}{\al_n}}{x^n -\Mo_{\kao}^+ z} } \\
& + \al_n \abs{\inner{\nabla \Qc(\Mo_{\kao}^+ z)}{\Mo_{\kao}^+ z - x^n}}
\end{align*}
and examine every term separately.

We start with the second term, which can be estimated straight forward with the sub-gradient $\nabla \frac{1}{2} \norm{\Mo_\kao (\cdot) - z^n}^2 = \Mo_{\kao} (\Mo_{\kao} (\cdot) - z^n)$.
 
\begin{align*}
\al_n \abs{\inner{\frac{\nabla \Rc_{\al_n}(x^n)}{\al_n}}{x^n -\Mo_{\kao}^+ z}} & = \abs{\inner{\Mo_{\kao} (\Mo_{\kao} x^n - z^n)}{\Mo_{\kao}^+ z - x^n }} \\
& \leq \frac{1}{2} \norm{\Mo_\kao \Mo_{\kao}^+ z - z^n}^2 - \frac{1}{2} \norm{\Mo_\kao x^n - z^n}^2 \\
& \leq \frac{1}{2} \delta_n^2 - \frac{1}{2} \norm{\Mo_\kao x^n - z^n}^2.
\end{align*}

In the third term, since $\nabla \Qc(\Mo_{\kao}^+ z) = (\ka_\la q_\la'(z_\la))_\la \in \im(\Mo_{\kao})$, there exists an $\eta \in \ell^2$ such that $\nabla \Qc(\Mo_{\kao}^+ z) = \Mo_\kao \eta$ and we get
\begin{align*}
\abs{\inner{\nabla \Qc(\Mo_{\kao}^+ z)}{\Mo_{\kao}^+ z - x^n}} & = \abs{\inner{\eta}{\Mo_\kao (\Mo_{\kao}^+ z - x^n)}} \\
& \leq \norm{\eta} \left( \norm{z- z^n} + \norm{ \Mo_\kao x^n - z^n} \right)\\
& \leq \norm{\eta} \left( \delta_n + \norm{ \Mo_\kao x^n - z^n} \right).
\end{align*}

In first term there is a bit more work to do. First we show that $\nabla \Qc(x^n)$ is in the range of $\Mo_\kao$. For every $\la \in \La$ define 
\begin{align*}
\eta^n_\la \coloneqq \frac{1}{\ka_\la} \left( \frac{s_{\al_n,\la}(\ka_\la \cdot)'(x_\la^n)}{\al_n}- q_\la(\ka_\la \cdot)' (x_\la^n) \right) = \frac{\ph_{\al_n}(\ka_\la,\cdot)^{-1}(\ka_\la x_\la^n) - \ka_\la x_\la^n}{\al_n}- q_\la'(\ka_\la x_\la^n) .
\end{align*}
By \eqref{phi_inte} holds
\[ 
\frac{\ph_{\al_n}(\ka_\la,\cdot)^{-1}(x) - x}{\al_n} \in \left[ q'_\la(x) - \frac{2L}{\ka_\la}\abs{x}, q'_\la(x) + \frac{2L}{\ka_\la}\abs{x} \right]
\]
for all $x \in \R$. 
Therefore, we have $\abs{\eta^n_\la} \leq 2L \abs{x_\la^n}$ and as a consequence $\norm{(\eta_\la^n)_{\la \in \La}} = \norm{\eta^n} \leq 2L \norm{x^n}$. This shows that $\eta^n \in \ell^2(\La)$ for every $n \in \N$ and by Proposition \ref{prop:conv} the sequence $(\eta^n)_{n \in \N}$ is bounded.

Thus, we have $\nabla \Qc(x^n) -\frac{\nabla \Rc_{\al_n}(x^n)}{\al_n} = -\Mo_\kao \eta^n$ and similar to above we get
\begin{align*}
\abs{\inner{\nabla \Qc(x^n)-\frac{\nabla \Rc_{\al_n}(x^n)}{\al_n}}{x^n - \Mo_{\kao}^+ z}} \leq \norm{\eta^n} \left( \delta_n + \norm{ \Mo_\kao x^n - z^n} \right).
\end{align*}

Putting all together we have 
\begin{multline*}
\al_n \abs{\inner{\nabla \Qc(x^n) - \nabla \Qc(\Mo_{\kao}^+ z)}{x^n -\Mo_{\kao}^+ z}} \\\ \leq \frac{1}{2} \delta_n^2 - \frac{1}{2} \norm{\Mo_\kao x^n - z^n}^2 + \al_n \left(\norm{\eta} + \norm{\eta^n}\right) \left( \delta_n + \norm{ \Mo_\kao x^n - z^n} \right)
\end{multline*}

and by the inequality of arithmetic and geometric means follows 
\begin{multline*}
\frac{1}{2} \norm{\Mo_\kao x^n - z^n}^2 + \al_n \abs{\inner{\nabla \Qc(x^n) - \nabla \Qc(\Mo_{\kao}^+ z)}{x^n -\Mo_{\kao}^+ z}} \\ \leq \frac{1}{2} \delta_n^2 + \al_n \left(\norm{\eta} + \norm{\eta^n}\right) \left( \delta_n + \norm{ \Mo_\kao x^n - z^n} \right) \\ 
\leq \frac{1}{2} \delta_n^2 + \al_n C \delta_n + C^2 \al_n^2 + \frac{1}{4}\norm{ \Mo_\kao x^n - z^n}^2, 
\end{multline*}
which shows the statement.
\end{proof}

\begin{remark}[Impact of the Neighbouring Condition \eqref{phi_inte} to $\qone_{\al,\la}$]
Suppose Assumption \ref{Ass} and condition \eqref{phi_inte} holds. Then, by the definition of the general proximity operator and integration we can reduce \eqref{phi_inte} to 
\[\frac{x^2}{2} + \qone_{\al,\la} (x)  \in \left[ \al q_\la (x) + (1- \frac{2 L \al}{\ka_\la}) \frac{x^2}{2}, \al q_\la(x) + (1+ \frac{2 L \al}{\ka_\la}) \frac{x^2}{2} \right]. \]
Combing this with Remark \ref{rem:Ass_A} we know that 
\[
\al \left(q_\la (x) + \frac{L x^2}{\ka_\la} \right) \geq \qone_{\al,\la}(x) \geq
\begin{cases}
    \max\{\left(\frac{\sqrt{\al}}{d \ka_\la} -1 \right) \frac{x^2}{2}, \al \left( q_\la (x) - \frac{L x^2}{\ka_\la}\right) \} & \abs{x} < 2 c d \sqrt{\al} \\
     \max\{\frac{c \al}{\ka_\la}\abs{x} - \frac{x^2}{2}, \al \left( q_\la (x) - \frac{L x^2}{\ka_\la} \right) \} & \abs{x} \geq 2 c d \sqrt{\al}. 
\end{cases}
\]

\end{remark}

\begin{example}[Smallest $\Qc$]
For a fixed $L>0$ and $\tilde \al > 0$ consider the functions $q_\la(x) = (L/\ka_\la-1/(2\tilde \al)) x^2$ for all $\la \in \La$, which is the smallest possible choice concerning Definition \ref{def:Neigh}. 
Then, the filters have to satisfy
\[ 
\begin{rcases}
    \min\{ \frac{1}{1-\al/\tilde \al}, \frac{d \ka_\la}{\sqrt{\al}}\} \abs{x} & \abs{x} \leq c \al/\ka_\la \\
     \frac{\abs{x}}{1-\al/\tilde \al} & \abs{x} \geq c \al/\ka_\la
\end{rcases}
\geq (\sign{x} )\ph_{\al}(\ka_\la, x) \geq \frac{\abs{x}}{1-\al/\tilde \al + 2L \al/\ka_\la} \]
and in the following
\[
\al \left(\frac{2 L}{\ka_\la} -\frac{1}{\tilde \al} \right) \frac{x^2}{2} \geq \qone_{\al,\la}(x) \geq
\begin{cases}
    \max\{\left(\frac{\sqrt{\al}}{d \ka_\la} -1 \right) \frac{x^2}{2}, -\frac{\al x^2}{2\tilde\al}\} & \abs{x} < 2 c d \sqrt{\al} \\
     \max\{\frac{c \al}{\ka_\la}\abs{x} - \frac{x^2}{2}, -\frac{\al x^2}{2\tilde\al} \} & \abs{x} \geq 2 c d \sqrt{\al}.
\end{cases}
\]
 
In other words, $q_\la$ defines a region where $\ph_{\al}(\ka_\la, \cdot)$ and $\qone_{\al,\la}$ should reside, respectively. The upper part of Figure \ref{fig:area} graphically illustrates the constraints when assumption \ref{ass:A1} partly dominates condition \eqref{phi_inte}. In the lower part of Figure \ref{fig:area}, the scenario where assumption \ref{ass:A1} contributes nothing beyond condition \eqref{phi_inte} is depicted. This is primarily observed in subsection \ref{subsec:norm}, except for a negligible quantity.
Note that for small $\ka_\la$, the region of the filter functions approaches zero, and for small $\al$, it tends towards the identity.
\end{example}

\begin{figure}[htb!]
\centering
\begin{tikzpicture}
\draw[white, fill=red!10] (0,0) -- (1,0.5) -- (1,2) -- (1.75,3.5) -- (3.5,3.5) -- (3.5, 0.875);
\draw[white, fill=red!10] (0,0) -- (-1, -1/2) -- (-1, -1/4);
\draw[->] (-1.2, 0) -- (4, 0) node[right] {$x$};
\draw[->] (0, -1) -- (0, 3.5) node[red!50, left] {$\ph_{\al}(\ka_\la,x)$};
\draw[black, dashed] (-1, -1) -- (3.5, 3.5);
\node at (3.5,3.5) [above, black] {$\id$};
\draw[black] (-1/2, -1) -- (1.75, 3.5);
\node at (1.75, 3.5) [above, black] {$\prox_{q+}$};
\draw[black, opacity=0.3] (-1, -1/2) -- (3.5, 1.75);
\node at (3.5, 1.75) [right, black] {$\frac{d\ka_\la}{\sqrt{\al}} x $};
\draw[black] (-1, -1/4) -- (3.5, 0.875);
\node at (3.5,0.875) [right, black] {$\prox_{q-}$};
\draw[black] (1,-0.1) -- (1,0.1);
\node at (1,-0.1) [below, black] {$c \al/\ka_\la$};
\end{tikzpicture}
\hfill
\begin{tikzpicture}
\draw[->] (-1.2, 0) -- (4, 0) node[right] {$x$};
\draw[->] (0, -1) -- (0, 3.5) node[blue!50, left] {$\qone_{\al,\la}(x)$};
\draw (-1, 0) -- (2.7, 0);
\node at (2.7,3.5) [right, black] {$q+$};
\draw[name path=A, domain=-1:2.7,variable=\x,  black] plot ({\x}, {1/2*\x*\x});
\draw[name path=B, domain=-1:2.7,variable=\x,  black, opacity=0.3] plot ({\x}, {1/4*\x*\x});
\draw[name path=C, domain=1:8/5,variable=\x,  black, opacity=0] plot ({\x}, {1/4*\x*\x});
\draw[name path=D, domain=8/5:2.7,variable=\x,  black, opacity=0] plot ({\x}, {1/4*\x*\x});
\tikzfillbetween[of=A and B, on layer=ft]{blue, opacity=0.1};
\draw[domain=-1:2.7,variable=\x,  black] plot ({\x}, {-\x*\x/8});
\draw[name path=F, domain=8/5:2.7,variable=\x,  black,  opacity=0] plot ({\x}, {-\x*\x/8});
\node at (2.7,-1) [right, black] {$q-$};
\draw[domain=-0.6:1.9,variable=\x,  black,  opacity=0.3] plot ({\x}, {\x -3/4*\x*\x});
\draw[name path=E, domain=1:8/5,variable=\x,  black, opacity=0] plot ({\x}, {\x -3/4*\x*\x});
\tikzfillbetween[of=C and E, on layer=ft]{blue, opacity=0.1};
\tikzfillbetween[of=D and F, on layer=ft]{blue, opacity=0.1};
\draw[black] (1,-0.3) -- (1,1.5);
\node at (1,1.5) [above, black] {$2 c d \sqrt{\al}$};
\end{tikzpicture}
\hfill \\[0.5cm]
\begin{tikzpicture}
\draw[white, fill=red!10] (0,0) -- (3.5,1.5) -- (3.5, 0.875);
\draw[white, fill=red!10] (0,0) -- (-1, -1.5/3.5) -- (-1, -1/4);
\draw[->] (-1.2, 0) -- (4, 0) node[right] {$x$};
\draw[->] (0, -1) -- (0, 3.5) node[red!50, left] {$\ph_{\al}(\ka_\la,x)$};
\draw[black, dashed] (-1, -1) -- (3.5, 3.5);
\node at (3.5,3.5) [above, black] {$\id$};
\draw[black] (-1, -1.5/3.5) -- (3.5, 1.5);
\node at (3.5,1.5) [right, black] {$\prox_{q+}$};
\draw[black, opacity=0.3] (-1, -2.5/3.5) -- (3.5, 2.5);
\node at (3.5,2.5) [right, black] {$\frac{d\ka_\la}{\sqrt{\al}} x $};
\draw[black] (-1, -1/4) -- (3.5, 0.875);
\node at (3.5,0.875) [right, black] {$\prox_{q-}$};
\draw[black] (1,-0.1) -- (1,0.1);
\node at (1,-0.1) [below, black] {$c \al/\ka_\la$};
\end{tikzpicture}
\hfill
\begin{tikzpicture}
\draw[->] (-1.2, 0) -- (4, 0) node[right] {$x$};
\draw[->] (0, -1) -- (0, 3.5) node[blue!50, left] {$\qone_{\al,\la}(x)$};
\draw (-1, 0) -- (2.7, 0);
\node at (2.7,3.5) [right, black] {$q+$};
\draw[name path=A, domain=-1:2.7,variable=\x,  black] plot ({\x}, {1/2*\x*\x});
\node at (2.7,1.75) [right, black] {$q-$};
\draw[name path=B, domain=-1:2.7,variable=\x,  black] plot ({\x}, {1/4*\x*\x});
\tikzfillbetween[of=A and B, on layer=ft]{blue, opacity=0.1};
\draw[domain=-1:2.7,variable=\x,  black,  opacity=0.3] plot ({\x}, {1/6*\x*\x});
\draw[domain=-0.6:1.9,variable=\x,  black,  opacity=0.3] plot ({\x}, {\x -3/4*\x*\x});
\draw[black] (1,-0.1) -- (1,0.1);
\node at (1,-0.1) [below, black] {$2 c d \sqrt{\al}$};
\end{tikzpicture}
\caption{This plot shows a graphically illustration of the neighbouring condition \eqref{phi_inte} and assumption \ref{ass:A1}. The red area is where $\ph_{\al}(\ka_\la,\cdot)$ should belong and the blue area is for the corresponding functional $\qone_{\al,\la}$. Here $\prox_{q+}$ refers to the right bound of condition \eqref{phi_inte}, namely the function $\prox_{\al ( q_\la - L(\cdot)^2)/\ka_\la }$, and $\prox_{q-}$ refers to the left bound. The upper part of  illustrates when assumption \ref{ass:A1} partly dominates condition \eqref{phi_inte}. In the lower part, the scenario where assumption \ref{ass:A1} contributes nothing beyond condition \eqref{phi_inte} is depicted. }\label{fig:area}
\end{figure}

\begin{theorem}[Convergence Rate with respect to the Symmetric Bregman Distance]
Let $(\ph_{\al})_{\al > 0}$ be a weakly convex regularizing filter which satisfies Assumption \ref{Ass}. 
Assume there exists a neighbouring stationary $\kao$-regularizer $\al\Qc$ to $\Mo^+_{\kao} \circ \Phi_{\al,\kao}$.
Suppose $y \in \ran(\Ao)$ such that $\To_{\vo}^*(y) \in \ran(\Phi_{\al,\kao})$ for all $\al \in (0,\tilde\al)$.
Let $(\delta_k)_k$, $ (\al_k)_k$ be two null sequences such that  $\al_k \cong \delta_k$ and let $y^k \in \Y$ such that $\snorm{y^k -y} \leq \delta_k$. 
Further, suppose that $\To_\uo^* (\Fc_{\al_k} (y^k)), \To_\uo^* \Ao^+ y \in \dom(\Qc)^\circ \, \cap \, \dom(\Qc')$ for all $k \in \N$ and $(q'_\la(\inner{u_\la}{\Ao^+ y}))_\la \in \ell^2(\La)$. 
Then, it holds
\[ 
D_{\Qc \circ \To^*_\vo} (\Fc_{\al_k}(y^k), \Ao^+ y)\leq C \delta_k \]
for a constant $C > 0$. 
\end{theorem}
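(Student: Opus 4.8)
The statement is the lift of Theorem~\ref{rate} from the diagonal operator $\Mo^+_\kao\circ\Phi_{\al,\kao}$ to the full reconstruction $\Fc_\al = \To_{\bar\uo}\circ\Mo^+_\kao\circ\Phi_{\al,\kao}\circ\To^*_\vo$. The plan is therefore to feed the diagonal estimate of Theorem~\ref{rate} with the analysis coefficients of the data and then transport the resulting bound back through the bounded frame operators. First I would set $z\coloneqq\To^*_\vo y$ and $z^k\coloneqq\To^*_\vo y^k$. Since $\snorm{\To^*_\vo}=\snorm{\To_\vo}$, the noise estimate transfers as $\norm{z-z^k}_2=\norm{\To^*_\vo(y-y^k)}_2\le\snorm{\To_\vo}\,\delta_k$. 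The range hypothesis is checked via the quasi-singular relation \ref{D3}, which yields $\To^*_\vo\Ao=\Mo_\kao\To^*_\uo$: taking $x^+\coloneqq\Ao^+y$, we get $z=\To^*_\vo\Ao x^+=\Mo_\kao\To^*_\uo x^+\in\ran(\Mo_\kao)$, while $z=\To^*_\vo y\in\ran(\Phi_{\al,\kao})$ for all $\al\in(0,\tilde\al)$ by assumption; hence $z\in\ran(\Mo_\kao)\cap\ran(\Phi_{\al,\kao})$. The remaining domain and summability hypotheses of Theorem~\ref{rate} are exactly the translations of those assumed here under the identity $z_\la=\ka_\la\inner{u_\la}{x^+}$.

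Next I would apply Theorem~\ref{rate} with $\delta_n$ replaced by $\snorm{\To_\vo}\delta_k$ and $\al_n$ by $\al_k$. Writing $x^k\coloneqq\Mo^+_\kao\circ\Phi_{\al_k,\kao}(z^k)$ and noting $\Mo^+_\kao z=\Mo^+_\kao\Mo_\kao\To^*_\uo x^+=\To^*_\uo x^+$ (as $\ker\Mo_\kao=\sset{0}$), this gives
\[ D_\Qc\kl{x^k,\To^*_\uo x^+}\le\frac{1}{2}\frac{\snorm{\To_\vo}^2\delta_k^2}{\al_k}+C\snorm{\To_\vo}\delta_k+C^2\al_k. \]
Here the a~priori parameter choice $\al_k\cong\delta_k$ enters: with $c_1\delta_k\le\al_k\le c_2\delta_k$ the first term is at most $\snorm{\To_\vo}^2\delta_k/(2c_1)$, the second is $O(\delta_k)$, and the third at most $C^2c_2\delta_k$, so the whole right-hand side collapses to $C'\delta_k$ for a new constant $C'>0$.

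Finally I would lift the bound to $\X$ using the chain rule for the absolute symmetric Bregman distance under a bounded linear map. Since $\nabla(\Qc\circ\To^*_\uo)=\To_\uo\circ\nabla\Qc\circ\To^*_\uo$, one has for $a,b\in\X$ the identity $D_{\Qc\circ\To^*_\uo}(a,b)=\abs{\inner{\nabla\Qc(\To^*_\uo a)-\nabla\Qc(\To^*_\uo b)}{\To^*_\uo a-\To^*_\uo b}}=D_\Qc(\To^*_\uo a,\To^*_\uo b)$, which is legitimate because the domain hypotheses place $\To^*_\uo\Fc_{\al_k}(y^k)$ and $\To^*_\uo\Ao^+y=\To^*_\uo x^+$ in $\dom(\Qc)^\circ\cap\dom(\Qc')$. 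Choosing $a=\Fc_{\al_k}(y^k)$ and $b=\Ao^+y$ then reduces the quantity to be bounded to $D_\Qc(\To^*_\uo\Fc_{\al_k}(y^k),\To^*_\uo x^+)$.

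The hard part will be matching $\To^*_\uo\Fc_{\al_k}(y^k)$ with the diagonal reconstruction $x^k$ produced in the previous step. Because $\uo$ is only a frame, $\To^*_\uo\To_{\bar\uo}$ is the projection onto $\ran\To^*_\uo$ rather than the identity, so from $\Fc_{\al_k}(y^k)=\To_{\bar\uo}x^k$ one only gets $\To^*_\uo\Fc_{\al_k}(y^k)=\To^*_\uo\To_{\bar\uo}x^k$. I would resolve this either by observing that $x^k\in\ran\To^*_\uo$ (so the projection acts trivially), which holds when $\uo$ is a Riesz basis and in particular in the SVD setting, or, in the genuinely redundant case, by carrying the projected coefficients $\To^*_\uo\To_{\bar\uo}x^k$ through the estimate of Theorem~\ref{rate} from the outset. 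Granting this identification, the three displays combine to give $D_{\Qc\circ\To^*_\uo}(\Fc_{\al_k}(y^k),\Ao^+y)\le C'\delta_k$, as claimed.
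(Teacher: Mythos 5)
Your proposal is, in substance, the paper's own proof: that proof consists of exactly your last two steps --- the chain rule for the absolute symmetric Bregman distance through the bounded (hence Fr\'echet differentiable) frame operators, followed by an appeal to Theorem~\ref{rate} and the parameter coupling $\al_k \cong \delta_k$. Your version is in fact more careful than the paper's: the paper does not spell out the transfer of the noise bound ($\norm{\To^*_\vo(y-y^k)} \leq \snorm{\To_\vo}\delta_k$), the verification $\To^*_\vo y = \Mo_\kao \To^*_\uo \Ao^+ y \in \ran(\Mo_\kao)$ via \ref{D3}, or the identity $\Mo^+_\kao \To^*_\vo y = \To^*_\uo \Ao^+ y$, all of which you supply.

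The one substantive issue is the ``hard part'' you flag at the end, and you should know that the paper does not resolve it either: its proof simply writes $\nabla \Qc(\To^*_\uo \Fc_{\al_k}(y^k))$ and bounds the resulting pairing by Theorem~\ref{rate}, thereby tacitly identifying $\To^*_\uo \Fc_{\al_k}(y^k) = \To^*_\uo \To_{\bar\uo} x^k$ with $x^k = \Mo^+_\kao \circ \Phi_{\al_k,\kao}(\To^*_\vo y^k)$. As you observe, for a redundant frame $\To^*_\uo \To_{\bar\uo}$ is the identity only on $\ran(\To^*_\uo)$, and the diagonal maps $\Mo^+_\kao$ and $\Phi_{\al_k,\kao}$ give no reason for $x^k$ to lie in that range. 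Note also that your fallback of ``carrying the projected coefficients through the estimate'' is not straightforward: the proof of Theorem~\ref{rate} hinges on the optimality identity $\nabla \Rc_{\al_n}(x^n) = -\Mo_\kao(\Mo_\kao x^n - z^n)$ from Lemma~\ref{lem_main}, which the projected coefficients $\To^*_\uo \To_{\bar\uo} x^k$ do not satisfy. So your argument is exactly as complete as the paper's: both are rigorous when $\uo$ is an orthonormal or Riesz basis (e.g.\ the SVD setting), and both leave the genuinely redundant case open. Two further mismatches in the paper that your reading silently corrects: the statement writes $D_{\Qc \circ \To^*_\vo}$ while the proof works with $\Qc \circ \To^*_\uo$, and matching the hypothesis $(q'_\la(z_\la))_\la \in \ell^2$ of Theorem~\ref{rate} literally requires $(q'_\la(\ka_\la \inner{u_\la}{\Ao^+ y}))_\la \in \ell^2(\La)$, since $z_\la = \ka_\la \inner{u_\la}{\Ao^+ y}$, whereas the statement omits the factor $\ka_\la$ inside $q'_\la$.
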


\begin{proof}
Note that the frame operators are linear and bounded, and therefore also Frechet differentiable. Then, by the chain rule and Theorem \ref{rate}, we have 
\begin{multline*}
\inner{\nabla ( \Qc \circ \To_\uo^*) (\Fc_{\al_k}(y^k)) - \nabla ( \Qc \circ \To_\uo^*) (x^+)}{\Fc_{\al_k}(y^k) - \Ao^+ y}  \\
= \inner{\nabla \Qc(\To_\uo^* \Fc_{\al_k}(y^k)) - \nabla \Qc (\To_\uo^* \Ao^+ y)}{\To_\uo^* ( \Fc_{\al_k}(y^k) - \Ao^+ y)}
\\ \leq \frac{\delta_n^2}{2\al_n} + C \delta_n + C^2 \al_n
\cong C \delta_n.
\end{multline*}
\end{proof}

\subsubsection{Estimates in the Norm Topology} \label{subsec:norm}

Consider $q_\la(x)= (\max_\la \ka_\la) L x^2/ \ka_\la^2$. Then, $\dom(\Qc)^\circ \, \cap \, \dom(\Qc') = \ell^2(\La)$ and $\nabla \Qc(x)= (\max_\la \ka_\la) 2 L x$ for all $x \in \ell^2(\La)$.
By this, we get a square root rate of convergence of the normed error:
\[\norm{\Ao^+ y- \Fc_{\al_k}(y^k)}^2 
 \leq \frac{\norm{(\To^*_\vo)^{-1}}^2}{(\max_\la \ka_\la) 2 L} D_{\Qc \circ \To^*_\vo} (\Fc_{\al_k}(y^k), \Ao^+ y)\leq C \delta_k. \]
The filters have to satisfy for $\abs{x} \leq c \al/\ka_\la \colon$
\begin{align*}
    \abs{\ph_{\al}(\ka_\la, x)} &\in \left[\frac{\abs{x}}{1+\left(\frac{\max_\la \ka_\la}{\ka_\la} + 1\right) \frac{2L \al}{\ka_\la}}, \min\left\{ \frac{1}{1+\left(\frac{\max_\la \ka_\la}{\ka_\la} -1\right) \frac{2L \al}{\ka_\la}} , \frac{d \ka_\la}{\sqrt{\al}}\right\} \abs{x} \right],
\end{align*}
otherwise, for $\abs{x} \geq c \al/\ka_\la \colon$
\begin{align*}
     \abs{\ph_{\al}(\ka_\la, x)} &\in \left[ \frac{\abs{x}}{1+\left(\frac{\max_\la \ka_\la}{\ka_\la} +1\right) \frac{2L \al}{\ka_\la}}, \frac{\abs{x}}{1+\left(\frac{\max_\la \ka_\la}{\ka_\la} -1\right) \frac{2L \al}{\ka_\la}} \right]
\end{align*}
and in the following, for $\abs{x} < 2 c d \sqrt{\al} \colon$
\begin{align*}
     \qone_{\al,\la}(x) &\in \left[\max\{\left(\frac{\sqrt{\al}}{d \ka_\la} -1 \right) \frac{x^2}{2}, \al \left(\frac{\max_\la \ka_\la}{\ka_\la} - 1 \right) \frac{L x^2}{\ka_\la}\}, \al \left(\frac{\max_\la \ka_\la}{\ka_\la} + 1 \right) \frac{L x^2}{\ka_\la} \right],
\end{align*}
and for $\abs{x} \geq 2 c d \sqrt{\al} \colon $
\begin{align*}
    \qone_{\al,\la}(x) &\in \left[ \max\{\frac{c \al}{\ka_\la}\abs{x} - \frac{x^2}{2}, \al \left(\frac{\max_\la \ka_\la}{\ka_\la} - 1 \right) \frac{L x^2}{\ka_\la} \}, \al \left(\frac{\max_\la \ka_\la}{\ka_\la} + 1 \right) \frac{L x^2}{\ka_\la} \right].
\end{align*}

Actually, Assumption \ref{ass:A1} in this case only plays a role for relative large $\al$, depending on the constants $d, L$ and $\max_\la \ka_\la$, and large $\ka_\la$. 

\begin{remark} 
It is important to note that condition \eqref{phi_inte} implies that 
\[\Qc(x) - L \snorm{\Mo_{\sqrt{\kao}}x}^2 \leq \frac{\Rc_\al(x)}{\al} \leq \Qc(x) + L \snorm{\Mo_{\sqrt{\kao}}x}^2, \]
which is reminiscent of the key condition in the work \cite{hauptmann2023convergent} for proving strong convergence of their regularization method. The authors attribute the Plug-and-Play method with a linear denoiser to variational regularization with a non-stationary regularizing term $\mathcal{J}_\al(x)$. Bounding this regularizing term between $c\norm{x}^2/2$ and $C\norm{x}^2/2$, where $0 < c \leq C < \infty$, appears to be a crucial argument for proving that the regularized solution strongly converges to an exact solution.
Defining $\Qc$ as above leads to: 
\[ L \sum_{\la \in \La} (\max \ka_\la-\ka_\la)x_\la^2 \leq \frac{\Rc_\al(x)}{\al} \leq L \sum_{\la \in \La} (\max \ka_\la+\ka_\la)x_\la^2, \]
which also indicates strong convergence and provides qualitative estimates with respect to the norm topology.
\end{remark}

\subsection{Discussion}
For filters learned on sinograms with additive Gaussian noise, we numerically investigated if they satisfy the theoretically derived properties to yield a convergent regularization method. To this end, we set $\alpha \coloneqq \sigma$, where $\sigma$ is the standard deviation of the Gaussian distribution, and defined $\ph_\alpha$ as the learned filters for the corresponding noise level. Table \ref{tab:properites} shows how well this learned filters satisfied the different assumption. It turned out that the filters satisfied the properties of weakly convex regularizing filters given in Definition \ref{def:filter} quite well. For all different noise levels, the filters were strictly increasing \ref{def:filter2}, and by construction, they are continuous, which further implies bijectivity \ref{def:filter1}. Also the values of $\ph_\alpha(k_\lambda,0)$ were very close to zero for all $\la$ and all noise levels \ref{def:filter3}. For smaller and smaller noise levels, the filters tend to converge toward the identity as $\sum_\la\norm{\ph_\alpha(\kappa_\la,\cdot)-\id}_{L^2}$ gets smaller for $\alpha$ getting smaller \ref{def:filter4}. Also, Assumption \ref{Ass} was satisfied by the learned filters. We observed that $\underset{\la,0\neq|x|<\alpha/\kappa}{\max} \left\{\frac{|\ph_\alpha(\kappa_\la, x)| \sqrt{\alpha}}{|x| \kappa_\la} \right\}$ does not grow for smaller $\al$ and hence stays bounded by a constant $d$ \ref{ass:A1}.  $\abs{\ph_\al(\ka_\la,x)}$ is easily bounded by $2\abs{x}$ for all $\la$, $\al$, and $x$ \ref{ass:A3}. Finally, choosing $g_\kappa(x) = \kappa^2 x$, we got that $\underset{\la,x \neq 0}{\min} \left\{\frac{|\ph_\alpha(\kappa_\lambda, x)|} {g_{\kappa_\lambda} (|x|)} \right\}$ stayed bigger than one confirming \ref{ass:A2}.  
\begin{table*}[!t]
\caption{Theoretically derived properties that the learned filters must satisfy in order to be a convergent regularization method checked numerically. All properties and assumptions are fulfilled with sufficient accuracy.}

\label{tab:properites}
\centering
\resizebox{\textwidth}{!}{%
\begin{tabular}{rl|c c c c c c}
                    & & $\alpha =24$ & $\alpha =20$ & $\alpha =16$ & $\alpha =12$ & $\alpha =8$ & $\alpha =4$ \\
\hline
\ref{def:filter1} &bijective          & $\surd$       & $\surd$      & $\surd$      & $\surd$     & $\surd$      & $\surd$      \\
\ref{def:filter2} & strictly increasing & $\surd$       & $\surd$      & $\surd$      & $\surd$     & $\surd$      & $\surd$      \\
\ref{def:filter3} &$\underset{\lambda}{\max} \{|\ph_\alpha(\kappa_\lambda,0)| \}$
                    & $0.021$        & $0.023$       & $0.017$       & $0.008$      & $0.021$       & $0.051$      \\
\ref{def:filter4} &$\sum_\la\norm{\ph_\alpha(\kappa_\la,\cdot)-\id}_{L^2}$       
                    & $0.31$       & $0.28$        & $0.22$       & $0.15$      & $0.09$      & $0.06$      \\
\midrule
\ref{ass:A1} &$\underset{\la,0\neq|x|<\alpha/\kappa}{\max} \left\{\frac{|\ph_\alpha(\kappa_\la, x)| \sqrt{\alpha}}{|x| \kappa_\la} \right\}$
                    & $28$        & $25$         & $22$        & $20$       & $21$       & $16$   \\
\ref{ass:A3} & $\underset{\la,x\neq 0}{\max} \left\{|\ph_\alpha(\ka_\la,x)| - 2|x|\right\}<0$  & $\surd$       & $\surd$      & $\surd$      & $\surd$     & $\surd$      & $\surd$      \\
\ref{ass:A2} &$\underset{\la,x\neq 0}{\min} \left\{\frac{|\ph_\alpha(\kappa_\lambda, x)|} {g_{\kappa_\lambda} (|x|)} \right\}$
                    & $1.31$        & $1.72$         & $1.82$        & $1.89$       & $1.58$       & $1.41$       
\end{tabular}}
\end{table*}

\section{Conclusion}
This paper investigated weakly convex frame-based image regularization. We established the connection between non-linear filtered diagonal frame decomposition (DFD) with learned filter functions and variational regularization with a weakly convex penalty term.

We presented experimental results by learning the so-called weakly convex regularizing filters, highlighted implementation specifics, and demonstrated the practical applicability of our method. The theoretical main section provided a comprehensive analysis of the stability and convergence of the filtered DFD with weakly convex regularizing filters, offering quantitative estimates to validate our approach.

We also conducted important technical work to improve understanding of the generalized proximity operator, separable sums of weakly convex functions, and their differentiability.

Finally, we concluded with a brief discussion of the consistency of our experimental results with the theoretical findings.

\section*{Acknowledgments}
This work was supported by the Austrian Science Fund (FWF) [grant number DOC 110].

\emergencystretch 2em
\printbibliography[heading=bibintoc]

\appendix
\section{Differentiability of Separable Functions}

\begin{lemma}\label{lem:diff_ineq}
Let $x, y \in \R$ and $t>t'>0$. Suppose $\ph \colon \R \rightarrow \R$ increasing with $\ph(0)=0$. Then, the following inequalities hold
\begin{align}
    \ph(x) y &\leq \frac{1}{t} \int_x^{x+t y} \ph(z) dz \leq \ph(x+ t y) y \label{lem:diff_ineq1}\\
    \ph(x) y &\leq \ph(x+t' y) y \leq \ph(x+t y) y \label{lem:diff_ineq2}
\end{align}
\end{lemma}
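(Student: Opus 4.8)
The plan is to reduce both displayed chains to a single monotonicity observation. Fix $x, y \in \R$ and define the auxiliary function $h \colon \R \to \R$ by $h(\tau) \coloneqq \ph(x + \tau y)\, y$. The crucial claim is that $h$ is non-decreasing. To see this, take $\tau_1 \leq \tau_2$; then $(x + \tau_2 y) - (x + \tau_1 y) = (\tau_2 - \tau_1) y$ has the same sign as $y$ (since $\tau_2 - \tau_1 \geq 0$ and the case $y=0$ is trivial), so by monotonicity of $\ph$ the difference $\ph(x + \tau_2 y) - \ph(x + \tau_1 y)$ also has the same sign as $y$; multiplying by $y$ yields $h(\tau_2) - h(\tau_1) = \bigl(\ph(x + \tau_2 y) - \ph(x + \tau_1 y)\bigr) y \geq 0$. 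This sign cancellation, where a possibly negative factor $y$ simultaneously reverses the monotonicity inequality and the multiplication, is the only delicate point of the argument.

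Given this, the chain \eqref{lem:diff_ineq2} is immediate: since $0 \leq t' \leq t$, monotonicity of $h$ gives $h(0) \leq h(t') \leq h(t)$, which is precisely $\ph(x) y \leq \ph(x + t' y) y \leq \ph(x + t y) y$.

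For \eqref{lem:diff_ineq1} I first treat the trivial case $y = 0$, where all three expressions vanish. For $y \neq 0$ I substitute $z = x + \tau y$, so that $dz = y\, d\tau$ and the endpoints $z = x$, $z = x + t y$ correspond to $\tau = 0$, $\tau = t$. This converts the integral into the average of $h$ over $[0, t]$:
\[ \frac{1}{t} \int_x^{x + t y} \ph(z)\, dz = \frac{1}{t} \int_0^t \ph(x + \tau y)\, y\, d\tau = \frac{1}{t} \int_0^t h(\tau)\, d\tau. \]
Since $h$ is non-decreasing, $h(0) \leq h(\tau) \leq h(t)$ for all $\tau \in [0, t]$; integrating over $[0, t]$ and dividing by $t > 0$ bounds the average between the endpoint values $h(0) = \ph(x) y$ and $h(t) = \ph(x + t y) y$, which is exactly \eqref{lem:diff_ineq1}.

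The main obstacle is the sign handling in establishing that $h$ is non-decreasing; once that monotonicity is in place, both displayed inequalities follow by elementary endpoint estimates for a non-decreasing function and its integral average, with no further case distinctions needed.
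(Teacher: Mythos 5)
Your proof is correct, but it takes a genuinely different route from the paper's. The paper proves the lemma by case analysis on the signs of $x$ and $y$: it treats $x,y \geq 0$ and $x \geq 0,\, y \leq 0$ explicitly, handling the orientation convention $\int_x^{x+ty} \ph = -\int_{x+ty}^{x} \ph$ by hand and invoking the left/right endpoint rules for Riemann sums of the monotone integrand, and then declares the remaining two cases analogous. You instead isolate a single monotonicity claim --- that $h(\tau) \coloneqq \ph(x+\tau y)\,y$ is non-decreasing, because the possibly negative factor $y$ cancels exactly the reversal of monotonicity it induces inside $\ph$ --- after which both \eqref{lem:diff_ineq1} and \eqref{lem:diff_ineq2} are endpoint estimates for a non-decreasing function and its average over $[0,t]$, with the affine substitution $z = x + \tau y$ absorbing all orientation issues automatically. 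What your route buys is the elimination of the sign case distinctions (only $y=0$ needs separate mention, and even that is harmless) and a visible common source for the two displayed inequalities; what the paper's route buys is that it never leaves the level of Riemann sums, whereas your substitution step, applied to a merely increasing (hence possibly discontinuous) integrand, deserves one justifying remark --- for an affine change of variables this is elementary, since Riemann sums transform directly under translation and scaling, so this is a presentational point rather than a gap.
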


\begin{proof}
    Suppose $x, y \geq 0$, then $x \leq x +ty$  and since $\ph$ is increasing the right and left rule for Riemann sums show inequality \eqref{lem:diff_ineq1}. Inequality \eqref{lem:diff_ineq2} also follows by the monotonicity of $\ph$ and $y$ being positive.

    Now suppose $x \geq 0$ and $y \leq 0$, then $x+ty \leq x$. By convention we have $\int_x^{x+t y} \ph(z) dz = - \int_{x+t y}^x \ph(z) dz$ and even if $x+ty \leq 0$ the right and left rule for Riemann sums imply $-\ph(x+ty) ty \leq \int_{x+t y}^x \ph(z) dz \leq - \ph(x) t y$,  showing \eqref{lem:diff_ineq1}. Inequality \eqref{lem:diff_ineq2} follows by the monotonicity of $\ph$ and $y$ being negative.

    The cases $x,y \leq 0$ and $x\leq 0, y \geq 0$ are analogous. 
\end{proof}

\begin{lemma}[Differentiability of Convex Separable Functions]\label{lem:diff_sep}
Let $(r_\la)_{\la \in \La}$ be a family of convex and continuously differentiable functions $r_\la \colon \R \rightarrow \R$. Suppose $r_\la(0)=0$ and $r_\la'(0)=0$. Define $\Rc(x) \coloneqq \sum_\la r_\la(x_\la) \in [0, \infty]$ and $\Rc'(x) \coloneqq \sum_\la r'_\la(x_\la) \in [- \infty, \infty]$ for all $x \in \ell^2(\La)$. Recall, $\dom(\Rc) = \{x \in \ell^2(\La) \mid (r_\la(x_\la))_\la \in \ell^1 \}$ and $\dom(\Rc') = \{x \in \ell^2(\La) \mid (r'_\la(x_\la))_\la \in \ell^2 \}$. Then, $\Rc$ is Gateaux differentiable at $x \in \dom(\Rc)^\circ \cap \dom(\Rc')$ with gradient $\nabla \Rc(x)=(r'_\la(x_\la))_\la $.
\end{lemma}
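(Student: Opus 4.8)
The plan is to verify Gateaux differentiability directly from the definition: fix $x \in \dom(\Rc)^\circ \cap \dom(\Rc')$ and an arbitrary direction $h \in \ell^2(\La)$, and show that the difference quotient $(\Rc(x+th)-\Rc(x))/t$ converges, as $t \to 0$, to $\sum_{\la} r_\la'(x_\la) h_\la = \inner{(r_\la'(x_\la))_\la}{h}$. Writing $D_\la(t) \coloneqq (r_\la(x_\la + t h_\la) - r_\la(x_\la))/t$, the difference quotient equals $\sum_\la D_\la(t)$, and since each $r_\la$ is differentiable, $D_\la(t) \to r_\la'(x_\la) h_\la$ for every $\la$ as $t \to 0$. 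Everything therefore reduces to interchanging the limit $t \to 0$ with the infinite sum over $\la$, which I would justify by a dominated convergence argument for series.

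To build a summable dominating sequence, I would first record two structural facts. Since $r_\la$ is convex with $r_\la'(0)=0$, the origin is its global minimizer, so $r_\la \geq 0 = r_\la(0)$ everywhere; this is what makes $\Rc$ nonnegative and lets me bound differences of $r_\la$ by sums of nonnegative values. Second, the map $t \mapsto r_\la(x_\la + t h_\la)$ is convex, so its slope $D_\la(t)$ is nondecreasing in $t$ on $\R \setminus \{0\}$ (the monotonicity of slopes encoded in Lemma \ref{lem:diff_ineq}, inequalities \eqref{lem:diff_ineq1}--\eqref{lem:diff_ineq2}, applied with the increasing function $\ph = r_\la'$). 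Because $x$ lies in the interior of $\dom(\Rc)$ and $h$ is fixed, I can choose $t_0 > 0$ small enough that $x \pm t_0 h \in \dom(\Rc)$. Monotonicity of $D_\la$ then yields the sandwich $D_\la(-t_0) \leq D_\la(t) \leq D_\la(t_0)$ for all $t$ with $0 < \abs{t} \leq t_0$, and hence $\abs{D_\la(t)} \leq g_\la \coloneqq \abs{D_\la(t_0)} + \abs{D_\la(-t_0)}$ uniformly in such $t$.

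Next I would check $\sum_\la g_\la < \infty$. Using $r_\la \geq 0$,
\[ \sum_\la \abs{D_\la(\pm t_0)} = \frac{1}{t_0}\sum_\la \abs{r_\la(x_\la \pm t_0 h_\la) - r_\la(x_\la)} \leq \frac{1}{t_0}\bigl(\Rc(x \pm t_0 h) + \Rc(x)\bigr) < \infty, \]
the finiteness coming precisely from $x, x \pm t_0 h \in \dom(\Rc)$. With the pointwise limit $D_\la(t) \to r_\la'(x_\la) h_\la$ and the uniform summable bound $g_\la$ in hand, the dominated convergence theorem for series gives $\lim_{t \to 0}\sum_\la D_\la(t) = \sum_\la r_\la'(x_\la)h_\la$ for both one-sided limits, and in particular this limiting series converges absolutely. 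Since $x \in \dom(\Rc')$ means $(r_\la'(x_\la))_\la \in \ell^2(\La)$, the linear functional $h \mapsto \sum_\la r_\la'(x_\la) h_\la$ is represented by that $\ell^2$ element, which identifies the Gateaux gradient as $\nabla \Rc(x) = (r_\la'(x_\la))_\la$.

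The crux of the argument --- and the step I expect to require the most care --- is the construction of the summable dominating sequence, i.e.\ controlling the coordinate slopes $D_\la(t)$ uniformly in $t$. The natural first attempt, bounding $D_\la(t)$ by $r_\la'(x_\la + t h_\la) h_\la$ via \eqref{lem:diff_ineq1}, fails because it would demand $x + t h \in \dom(\Rc')$, which is not part of the hypotheses. The fix is to exploit convexity twice: once to replace the derivative bound by the slope bound $D_\la(t_0)$ (monotonicity of slopes), and once to invoke $r_\la \geq 0$ so that the dominating sums reduce to evaluations of $\Rc$ at the interior points $x \pm t_0 h$. This is exactly where the interior assumption $x \in \dom(\Rc)^\circ$ and the nonnegativity coming from $r_\la'(0)=0$ are indispensable.
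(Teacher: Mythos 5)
Your proof is correct, and it reaches the result by a different convergence mechanism than the paper. Both arguments reduce the claim to interchanging the limit $t \to 0$ with the sum over $\la$, and both draw their leverage from the monotonicity of difference quotients of convex functions (the content of Lemma \ref{lem:diff_ineq}). The paper, however, exploits this monotonicity directly: as $t \downarrow 0$ the coordinate quotients decrease pointwise to $r_\la'(x_\la)y_\la$, so the \emph{monotone} convergence theorem applies once the sum is finite at a single $t>0$ and bounded below by $\sum_\la r_\la'(x_\la)y_\la$ (finite by Cauchy--Schwarz, since $x \in \dom(\Rc')$ and $y \in \ell^2$); no dominating sequence is ever constructed, and the limit from the other side is left implicit (replace $y$ by $-y$). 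You instead run \emph{dominated} convergence, which forces you to build the summable envelope $g_\la = \abs{D_\la(t_0)}+\abs{D_\la(-t_0)}$; that construction rests on exactly the two observations you single out, namely $r_\la \geq 0$ (from convexity and $r_\la(0)=r_\la'(0)=0$), so that $\sum_\la \abs{r_\la(x_\la\pm t_0h_\la)-r_\la(x_\la)} \leq \Rc(x\pm t_0h)+\Rc(x)$, and $x \in \dom(\Rc)^\circ$, to place $x \pm t_0 h$ in $\dom(\Rc)$. The trade-off: the paper's route is shorter, since monotonicity makes an envelope unnecessary, while yours is more explicit about where each hypothesis enters (in the paper the interior assumption is used only implicitly, through the finiteness of the sum at the initial $t$ that the monotone convergence theorem requires, and $\dom(\Rc')$ through the lower bound) and it handles the two one-sided limits symmetrically in a single stroke. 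One small attribution remark: the sandwich $D_\la(-t_0)\leq D_\la(t)\leq D_\la(t_0)$ across $t=0$ is not literally the displayed inequalities \eqref{lem:diff_ineq1}--\eqref{lem:diff_ineq2}, which concern $r_\la'(x_\la+ty_\la)y_\la$ for $t>0$; it is the classical chord-slope inequality for the convex function $t \mapsto r_\la(x_\la+th_\la)$ (or it can be recovered from \eqref{lem:diff_ineq2} by writing $D_\la(t)=\int_0^1 r_\la'(x_\la+sth_\la)h_\la\,\diff s$), so this is a cosmetic point rather than a gap.
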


\begin{proof}
By the fundamental theorem of calculus we have $r_\la(x)=\int_0^x r_\la'(z) dz$ for all $x \in \R$. 
Let $y \in \ell^2(\La)$ and $0 < t' < t$. Then, by Lemma \ref{lem:diff_ineq}, we have
\begin{align*}
     \sum_\la r_\la'(x_\la) y_\la  &\leq \sum_\la \frac{1}{t} \int_{x_\la}^{x_\la + t y_\la} r_\la'(z) dz \leq \sum_\la r_\la'(x_\la + t y_\la) y_\la \\
     \sum_\la r_\la'(x_\la) y_\la &\leq \sum_\la r_\la'(x_\la + t' y_\la) y_\la \leq \sum_\la r_\la'(x_\la + t y_\la) y_\la.
\end{align*}
By the monotone convergence theorem the limit $\lim_{t \downarrow 0} \sum_\la \frac{1}{t} \int_{x_\la}^{x_\la + t y_\la} r_\la'(z) dz$ exists and it holds
\begin{align*}
    \lim_{t \downarrow 0} \frac{\Rc(x+ty) - \Rc(x)}{t} = \lim_{t \downarrow 0} \sum_\la \frac{1}{t} \int_{x_\la}^{x_\la + t y_\la} r_\la'(z) dz = \sum_\la r_\la'(x_\la) y_\la = \inner{\nabla \Rc(x)}{y}.
\end{align*}
\end{proof}

\begin{proof}[Proof of Proposition \ref{prop:diff_R}]
Consider $r_\la(x) = (\ka_\la x)^2/2 + \qone_{\al,\la}(\ka_\la x)$. Then, $r_\la$ is continuously differentiable with $r_\la(0) = \qone_{\al,\la}(0) = 0$ and $r_\la'(0) = \ka_\la \qone_{\al,\la}'(0) = 0$. Furthermore, $r_\la'(x) = \ka_\la (\id + \qone_{\al,\la}')(\ka_\la x)$ is increasing by Proposition \ref{prop:prox} which implies that $r_\la$ is convex. Define $\Rc(x)=\sum_\la r_\la(x_\la)$ and note that $\dom(\Rc)^\circ \cap \dom(\Rc') = \dom(\Rc_\al)^\circ \cap \dom(\Rc_\al')$. By Lemma \ref{lem:diff_sep} we have that $\Rc$ is differentiable on $\dom(\Rc_\al)^\circ \cap \dom(\Rc_\al')$. In addition, note that Lemma \ref{lem:diff_sep} shows the differentiablility of $\norm{\Mo_\kao x}_2^2$ on $\ell^2(\La)$. Then,  $\Rc_\al$, as a sum of differentiable functions, is differentiable on $\dom(\Rc_\al)^\circ \cap \dom(\Rc_\al')$ and 
\[\nabla \Rc_\al (x) = \nabla \left( \Rc(x) - 1/2 \norm{\Mo_\kao x}_2^2 \right) = (\ka_\la \qone'_{\al,\la}(\ka_\la x_\la))_\la.\]
\end{proof}

\begin{lemma}\label{domR}
Let $(\ph_{\al})_{\al> 0}$ be a weakly convex regularizing filter and fix $\al>0$. 
Then for all $x = (x_\la)_{\la}  \in \ell^2(\La)$ the following identity  holds:
\[ \Rc_\al(x) = \norm{ \left( \int_0^{\ka_\la x_\la} \ph_{\al}(\ka_\la,\cdot)^{-1}(y)  dy \right)_\la }_1 - \frac{1}{2}\norm{\Mo_{\kao} x}_2^2.\]
In particular, $x \in \dom(\Rc_\al)$ if and only if $\left( \int_0^{\ka_\la x_\la} \ph_{\al}(\ka_\la,\cdot)^{-1}(y)  dy \right)_\la \in \ell^1(\La)$.
\end{lemma}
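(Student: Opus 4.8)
The plan is to reduce the claimed identity to the definition of the $\kao$-regularizer and then justify splitting one infinite sum into two. Recall that $\Rc_\al(x) = \sum_{\la\in\La}\qone_{\al,\la}(\ka_\la x_\la)$ and that, by definition of the $\kao$-regularizer, $\qone_{\al,\la}(\ka_\la x_\la) = \int_0^{\ka_\la x_\la}\ph_\al(\ka_\la,\cdot)^{-1}(y)\,\diff y - (\ka_\la x_\la)^2/2$. Writing $a_\la \coloneqq \int_0^{\ka_\la x_\la}\ph_\al(\ka_\la,\cdot)^{-1}(y)\,\diff y$ and $b_\la \coloneqq (\ka_\la x_\la)^2/2$, the assertion is exactly $\sum_\la(a_\la - b_\la) = \norm{(a_\la)_\la}_1 - \tfrac12\norm{\Mo_\kao x}_2^2$, so everything reduces to showing $\sum_\la(a_\la-b_\la) = \sum_\la a_\la - \sum_\la b_\la$.

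First I would establish the two sign/finiteness facts that make the splitting legitimate. Since $\ph_\al(\ka_\la,\cdot)$ is bijective, strictly increasing and fixes $0$ by \ref{def:filter1}--\ref{def:filter3}, its inverse is strictly increasing with $\ph_\al(\ka_\la,\cdot)^{-1}(0)=0$; hence $\ph_\al(\ka_\la,\cdot)^{-1}(y)$ has the same sign as $y$, and therefore $a_\la \ge 0$ for every $\la$ (using the stated convention $\int_0^s = -\int_s^0$ for $s<0$, which turns a negative integrand on $[s,0]$ into a non-negative value). Secondly, $b_\la \ge 0$ and $\sum_\la b_\la = \tfrac12\sum_\la(\ka_\la x_\la)^2 = \tfrac12\norm{\Mo_\kao x}_2^2$, which is finite because $\sup_\la\ka_\la<\infty$ makes $\Mo_\kao$ a bounded operator on $\ell^2(\La)$.

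With both $(a_\la)_\la$ and $(b_\la)_\la$ non-negative and $\sum_\la b_\la<\infty$, the splitting is unambiguous: the negative part of each summand satisfies $(a_\la-b_\la)^-\le b_\la$, so $\sum_\la(a_\la-b_\la)$ is well defined in $(-\infty,+\infty]$, and passing to the limit of partial sums over finite subsets $F\uparrow\La$ gives $\sum_{\la\in F}a_\la \nearrow \norm{(a_\la)_\la}_1\in[0,\infty]$ together with $\sum_{\la\in F}b_\la\to\tfrac12\norm{\Mo_\kao x}_2^2$. Hence $\Rc_\al(x)=\norm{(a_\la)_\la}_1-\tfrac12\norm{\Mo_\kao x}_2^2$. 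The ``in particular'' clause is then immediate: since the subtracted term is finite, $\Rc_\al(x)<\infty$ precisely when $\norm{(a_\la)_\la}_1<\infty$, i.e. when $\bigl(\int_0^{\ka_\la x_\la}\ph_\al(\ka_\la,\cdot)^{-1}(y)\,\diff y\bigr)_\la\in\ell^1(\La)$.

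The only genuinely delicate point is the well-definedness of the infinite sum: a priori $\sum_\la\qone_{\al,\la}(\ka_\la x_\la)$ could be an indeterminate $\infty-\infty$. The argument above sidesteps this by isolating the potential divergence in the non-negative sequence $(a_\la)_\la$ and showing that the subtracted sequence $(b_\la)_\la$ is summable. I therefore expect the finiteness of $\tfrac12\norm{\Mo_\kao x}_2^2$ — equivalently the boundedness of $\Mo_\kao$, i.e. $\sup_\la\ka_\la<\infty$ — to be the load-bearing fact, with the remainder of the proof being direct substitution of the definition of $\qone_{\al,\la}$.
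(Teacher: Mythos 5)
Your proof is correct and follows essentially the same route as the paper's: substitute the definitional identity $\qone_{\al,\la}(x)=\int_0^x\ph_\al(\ka_\la,\cdot)^{-1}(y)\,\diff y - x^2/2$ into $\Rc_\al(x)=\sum_\la \qone_{\al,\la}(\ka_\la x_\la)$ and split the sum into the $\ell^1$-norm of the integral terms minus $\tfrac12\norm{\Mo_\kao x}_2^2$. The only difference is that you explicitly verify what the paper leaves implicit --- non-negativity of each integral (which is what lets the sum be written as an $\ell^1$-norm) and summability of $\bigl((\ka_\la x_\la)^2/2\bigr)_\la$ via $\sup_\la \ka_\la<\infty$ --- so that the splitting involves no $\infty-\infty$ ambiguity.
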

\begin{proof}
By Proposition \ref{prop:prox} we can calculate
\begin{align*}
\Rc_\al(x) & = \sum_{\la \in \La} \qone_{\al,\la}(\ka_\la x_\la) = \sum_{\la \in  \La} \int_0^{\ka_\la x_\la} \ph_{\al}(\ka_\la,\cdot)^{-1}(y) - y \, dy \\
& = \sum_{\la \in  \La} \int_0^{\ka_\la x_\la} \ph_{\al}(\ka_\la,\cdot)^{-1}(y)\, dy - \sum_{\la \in  \La} \frac{1}{2}(\ka_\la x_\la)^2 \\
& = \norm{ \int_0^{\ka_\la x_\la} \ph_{\al}(\ka_\la,\cdot)^{-1}(y) \, dy}_1 - \frac{1}{2} \norm{\Mo_{\kao} x}_2^2.
\end{align*}
\end{proof}

\section{Proof of Proposition~\ref{prop:conv}}

Choose $c, d > 0$ such that \ref{ass:A1} holds. Denote $z = (z_\la)_{\la \in \La} \in \ell^2(\La)$. 
Since $\sup_{\la \in \La} \ka_\la < \infty$ there are only finitely many $\la$ such that $\abs{z_\la} > c \al/\ka_\la$. 
Therefore we have
\begin{align*}
\norm{\Mo^+_{\kao} \circ \, \Phi_{\al,\kao}(z)}^2 &= \sum_{\la \in \La} \frac{\abs{\ph_{\al}(\ka_\la,z_\la)}^2}{\ka_\la^2} \\
&= \sum_{\abs{z_\la}\leq c \al/\ka_\la} \frac{\abs{\ph_{\al}(\ka_\la,z_\la)}^2}{\ka_\la^2}+ \sum_{\abs{z_\la} > c \al/\ka_\la} \frac{\abs{\ph_{\al}(\ka_\la,z_\la)}^2}{\ka_\la^2} \\
&\leq \sum_{\abs{z_\la}\leq c \al/\ka_\la} \frac{d^2}{\al}\abs{z_\la}^2 + C < \infty. 
\end{align*}
This shows that $\Mo^+_{\kao} \circ \, \Phi_{\al,\kao}(z) \in \ell^2(\La)$ for arbitrary $z \in \ell^2(\La)$.
Now, for the sequence $\norm{z-z^n} \leq \delta_n \rightarrow 0$ define the constant $a \coloneqq c \al/\sup_\la \ka_\la$ and 
the index sets $\La' \coloneqq \{\la \in \La \mid \abs{z_\la} > a/2 \}$, $\La^n \coloneqq \{\la \in \La \mid \abs{z^n_\la} > 3a/4 \}$ for $n \in \N$. Choose $N \in \N$ such that $\delta_n \leq a/4$ for all $n \geq N$. Then, for all $n \geq N$, we get $\La^n \subset \La'$ and 
\begin{align*}
\norm{\Mo^+_{\kao} \circ \, \Phi_{\al,\kao}(z^n)}^2 &= \sum_{\la \in \La^n} \abs{\frac{1}{\ka_\la}\ph_{\al}(\ka_\la,z^n_\la)}^2 + \sum_{\la \notin \La^n} \abs{\frac{1}{\ka_\la}\ph_{\al}(\ka_\la,z^n_\la)}^2 \\
& \leq \sum_{\la \in \La'} \abs{\frac{1}{\ka_\la}\ph_{\al}(\ka_\la,z^n_\la)}^2 + \sum_{\la \notin \La^n}\frac{d^2}{\al} \sabs{z^n_\la}^2 \\
& \leq C + \frac{d^2}{\al} (\norm{z}+\max_n \delta_n)^2, 
\end{align*}
where the first sum is bounded by the facts that $\La'$ is finite and $\ph_\al(\ka_\la, \cdot)$ is increasing together with the estimate $z_\la - \max\delta_n < z_\la^n < z_\la + \max \delta_n$.
Hence the sequence $(\Mo^+_{\kao} \circ \, \Phi_{\al,\kao}(z^n))_{n \in \N}$ is bounded.
Let $(e_\la)_{\la \in \La}$ be the unit basis. 
By the continuity of $\ph_{\al,\la}$ we have
\begin{align*}
\abs{\inner{\Mo^+_{\kao} \circ \, \Phi_{\al,\kao}(z^{n}) - \Mo^+_{\kao} \circ \, \Phi_{\al,\kao} (z)}{e_\la}} & = \abs{\frac{1}{\ka_\la} \ph_{\al}(\ka_\la,z_\la) - \frac{1}{\ka_\la} \ph_{\al}(\ka_\la,z^{n}_\la)} \\
& = \frac{1}{\ka_\la} \abs{\ph_{\al}(\ka_\la,z_\la) - \ph_{\al}(\ka_\la,z^{n}_\la)} \rightarrow 0
\end{align*}
Thus, $\Mo^+_{\kao} \circ \, \Phi_{\al,\kao}(z^n) \rightharpoonup \Mo^+_{\kao} \circ \, \Phi_{\al,\kao}(z)$.

Now, we show convergence. Assume $z\in \ran(\Mo_{\kao}) \cap \ran(\Phi_{\al,\kao})$ for all $\al \in (0,\tilde \al)$ and some $\tilde{\al}> 0$.
Define $x^n \coloneqq \Mo^+_{\kao} \circ \, \Phi_{\al_n,\kao}(z^n)$ and  $\La_n \coloneqq \{\la \in \La \mid \abs{z_\la^n} \geq c \al_n/\ka_\la \} $ for all $n \in \N$.
Then, we have for all $\la \in \La_n$ that $\norm{\Mo_{\kao}^+ z}_\infty + \delta_n/\ka_\la \geq \abs{z_\la}/\ka_\la + \delta_n/\ka_\la\geq \abs{z^n_\la}/\ka_\la \geq c \al_n/\ka^2_{\la} \geq C \delta_n^2/\ka_\la^2$ 
for a constant $C > 0$ (independent of $n$).
Therefore $\norm{\Mo_{\kao}^+ z}_\infty \geq  \delta_n/\ka_\la  (\delta_n/\ka_\la -1)$ which implies that the sequence $\left(\delta_n/\inf_{\la \in \La_n} \ka_\la \right)_{n \in \N}$ is bounded, otherwise one can create a contradiction to $z \in \ran(\Mo_{\kao})$.  
Therefore, for some constant $D > 0$,  
\begin{align*}
\norm{x^n}^2 & = \sum_{\la\in \La_n} \abs{\frac{1}{\ka_\la}\ph_{\al_n}(\ka_\la,z^n_\la)}^2 + \sum_{\underset{\ka_\la > \sqrt{\al_n}}{\la \notin \La_n}} \abs{\frac{1}{\ka_\la}\ph_{\al_n}(\ka_\la,z^n_\la)}^2 + \sum_{\underset{\ka_\la \leq \sqrt{\al_n}}{\la \notin \La_n}} \abs{\frac{1}{\ka_\la}\ph_{\al_n}(\ka_\la,z^n_\la)}^2 \\
& \leq \sum_{\la\in \La_n} \left(\frac{K \sabs{z^n_\la}}{\ka_\la}\right)^2 + \sum_{\underset{\ka_\la > \sqrt{\al_n}}{\la \notin \La_n}} \left(\frac{K \abs{z^n_\la}}{\ka_\la}\right)^2 + d \sum_{\underset{\ka_\la \leq \sqrt{\al_n}}{\la \notin \La_n}} \left(\frac{\abs{z^n_\la}}{\sqrt{\al_n}}\right)^2 \\
& \leq K^2 \sum_{\la\in \La_n} \left( \frac{\sabs{z_\la} + \abs{z_\la^n-z_\la}}{\ka_\la}\right)^2 + (K^2 + d) \sum_{{\la \notin \La_n}} \left(\frac{\abs{z_\la}}{\ka_\la} + \frac{\abs{z_\la^n-z_\la}}{\sqrt{\al_n}}\right)^2 \\
& \leq K^2 \left(\norm{\Mo_{\kao}^+ z} + \frac{\delta_n}{\inf_{\la \in \La_n} \ka_\la} \right)^2 + (K^2 + d) \left(\norm{\Mo_{\kao}^+ z} +\frac{\delta_n}{\sqrt{\al_n}}  \right)^2 \\
& \leq D.
\end{align*}
Thus $(x^n)_n$ is bounded and has a weakly convergent subsequence. 
Let $(x^{n_l})_l$ be such a weakly convergent subsequence with weak limit $x^+$. 
Now we show for a fixed $\la \in \La$ that $\qone_{\al_{n_l},\la}(z_\la) \rightarrow 0$ and $\qone_{\al_{n_l},\la}(\ka_\la x^{n_l}_\la) \rightarrow 0$ as $l \rightarrow \infty$.
Recall that by Proposition \ref{prop:prox} we have the representation 
$\qone_{\al,\la}(x) = \int_0^{x} \ph_{{\al}}(\ka_\la,\cdot)^{-1}(y) -y \, dy$.
By Assumption \ref{def:filter4} it holds $\lim_{\al \rightarrow 0} \ph_{\al}(\ka_\la,x) = x$ and by bijectivity it holds also $\lim_{\al \rightarrow 0} \ph_{\al}(\ka_\la,\cdot)^{-1}(y) = y$.
Furthermore, from Assumption \ref{ass:A2} there exists $g \colon [0,\infty) \rightarrow [0,\infty)$ bicontinuous such that
$\forall \al \in (0,\tilde \al) \colon \abs{\ph_{\al}(\ka_\la,y)} \geq g(\abs{y})$
and therefore $\abs{\ph_{\al}(\ka_\la,\cdot)^{-1}(y)} \leq g^{-1}(\abs{y})$. 
By the dominated convergence holds $\qone_{\al,\la}(z_\la) \rightarrow 0$ as $\al \rightarrow 0$. Further, we have
\begin{align*}
\abs{\qone_{\al_{n_l},\la} (\ka_\la x^{n_l}_\la)} &\leq \int_0^{\ka_\la \abs{x^{n_l}_\la}} \abs{\ph_{{\al_{n_l}}}(\ka_\la,\cdot)^{-1}(\sign(x^{n_l}_\la)y) - \sign(x^{n_l}_\la)y} \, dy \\
& \leq \int_0^{\ka_\la \underset{l \in \N}{\max}\abs{x^{n_l}_\la}} \abs{\ph_{{\al_{n_l}}}(\ka_\la,\cdot)^{-1}(\sign(x^{n_l}_\la)y) - \sign(x^{n_l}_\la)y} \, dy
\end{align*}
and therefore also by the dominated convergence $\qone_{\al_{n_l},\la} (\ka_\la x^{n_l}_\la) \rightarrow 0$ as $\al \rightarrow 0$.

By Lemma \ref{lem_main} , we have $x^{n_l}_\la = \argmin\{\frac{1}{2}\abs{\ka_\la x - z^{n_l}_\la}^2 + \qone_{\al_{n_l},\la}(\ka_\la x) \}$, thus
\begin{align*}
\frac{1}{2}\abs{\ka_\la x^{n_l}_\la - z^{n_l}_\la}^2 + \qone_{\al_{n_l},\la}(\ka_\la x^{n_l}_\la)
& \leq \frac{1}{2}\abs{z_\la - z^{n_l}_\la}^2 + \qone_{\al_{n_l},\la}(z_\la) \\ 
& \leq \frac{1}{2} \delta_{n_l}^2 + \qone_{\al_{n_l},\la}(z_\la)
\rightarrow 0, 
\end{align*}
as $l \rightarrow \infty$. 
Since $\qone_{\al_{n_l},\la} (\ka_\la x^{n_l}_\la) \rightarrow 0$ it follows that $\abs{\ka_\la x^{n_l}_\la - z^{n_l}_\la} \rightarrow 0$.  
Therefore, $x^+_\la = z_\la/\ka_\la$ for every $\la \in \La$. 
This holds for every weakly convergent subsequence, so we can conclude that $x^n \rightharpoonup \Mo_{\kao}^+ z$.

\end{document}